\documentclass[10pt]{amsart}

\usepackage{amssymb,latexsym,amsmath,epsfig,amsthm} 
\usepackage[utf8]{inputenc} 
\usepackage{amsbsy}
\usepackage{amscd}
\usepackage{verbatim}
\usepackage[english]{babel}
\usepackage{dsfont}
\usepackage{anysize}
\usepackage{hyperref}
\usepackage{leftidx}
\usepackage{footnote}
\usepackage{tablefootnote}

\newcommand{\C}{\mathds{C}}

\newcommand{\q}{\quad}

\newtheorem{thm}{Theorem}[section]
\newtheorem{lem}[thm]{Lemma}

\newtheorem{prop}[thm]{Proposition}
\newtheorem{kor}[thm]{Corollary}

\theoremstyle{definition}
\newtheorem*{defi}{Definition}

\theoremstyle{remark}
\newtheorem*{rema}{Remark}

\title{New estimates for some integrals of functions defined over primes}

\author{Christian Axler}
\email{christian.axler@hhu.de}

\date{\today}
\subjclass[2010]{11N05 (Primary) 11M26 (Secondary)}
\keywords{Chebyshev’s $\vartheta$-function, prime counting function, Riemann hypothesis}

\begin{document}

\begin{abstract}
In this paper we give new estimates for integrals involving some arithmetic functions defined over prime numbers. The main focus here is on the prime counting function $\pi(x)$ and the Chebyshev $\vartheta$-function. Some of these estimates depend on the correctness of the Riemann hypothesis on the nontrivial zeros of the Riemann zeta function $\zeta(s)$. 
\end{abstract}

\maketitle

\section{Introduction}

The Riemann zeta function is for all complex numbers $s$ with $\text{Re}(s) > 1$ defined as
\begin{displaymath}
\zeta(s) = \sum_{n=1}^s \frac{1}{n^s}.
\end{displaymath}
It is well known that the Riemann zeta function is a meromorphic function on the whole complex plane, which is holomorphic everywhere except for a simple pole at $s = 1$ with residue 1. Euler discovered a remarkable connection between the Riemann zeta function and the prime numbers by showing that
\begin{displaymath}
\zeta(s) = \prod_{p} \frac{1}{1-p^{-s}} \q\q (\text{Re}(s) > 1),
\end{displaymath}
where $p$ runs over all primes. Therefore, the Riemann zeta function plays a special role in analytic number theory. The Riemann zeta function satisfies the functional equation
\begin{displaymath}
\zeta(s)= 2^s \pi^{s-1} \sin \left({\frac{\pi s}{2}} \right) \Gamma(1-s)\zeta (1-s),
\end{displaymath}
where $\Gamma(s)$ is the gamma function. This is an equality of meromorphic functions valid on the whole complex plane. Due to the zeros of the sine function, the functional equation implies that $\zeta(s)$ has outside the set $\{ s \in \C \mid 0 \leq \text{Re}(s) \leq 1 \}$ a simple zero at each even negative integer $s = -2n$, known as the \textit{trivial zeros} of the Riemann zeta function. The \textit{nontrivial zeros}, i.e. the zeros in the set $\{ s \in \C \mid 0 \leq \text{Re}(s) \leq 1 \}$, have attracted far more attention because not only is their distribution far less well known, but their study also yields important results concerning primes and related objects in number theory.
The \textit{Riemann hypothesis} asserts that the real part of every nontrivial zero of the Riemann zeta function is $1/2$. To this day, the Riemann hypothesis is considered one of the greatest unsolved problems in mathematics. In this paper we will derive effective estimates for some integrals of functions defined over primes. Some of these estimates are based on the assumption that the Riemann hypothesis is true. First, we consider the prime counting function $\pi(x)$ which gives the number of primes not exceeding $x$. Hadamard \cite{hadamard1896} and de la Vall\'{e}e-Poussin \cite{vallee1896} independently proved a result concerning the asymptotic behavior for $\pi(x)$, namely
\begin{equation}
\pi(x) \sim \text{li}(x) \q\q (x \to \infty), \tag{1.1} \label{1.1}
\end{equation}
which is known as the \textit{Prime Number Theorem}. Here, the \textit{integral logarithm} $\text{li}(x)$ is defined by
\begin{displaymath}
\text{li}(x) = \int_0^x \frac{\text{d}t}{\log t} = \lim_{\varepsilon \to 0+} \left \{ \int_{0}^{1-\varepsilon}{\frac{\text{d}t}{\log t}} + 
\int_{1+\varepsilon}^{x}{\frac{\text{d}t}{\log t}} \right \}.
\end{displaymath}
In a later paper \cite{vallee1899}, where the existence of a zero-free region for the Riemann zeta function to the left of the line $\text{Re}(s) = 1$ was proved, de la Vall\'{e}e-Poussin also estimated the error term in the Prime Number Theorem by showing that
\begin{equation}
\pi(x) = \text{li}(x) + O(x e^{-c_0\sqrt{\log x}}) \q\q (x \to \infty), \tag{1.2} \label{1.2}
\end{equation} 
where $c_0$ is a positive absolute constant. In 1793, Gauss computed that
\begin{equation}
\pi(x) \leq \text{li}(x) \tag{1.3} \label{1.3}
\end{equation}
holds for every $x$ with $2 \leq x \leq 3,000,000$ and conjectured that the inequality \eqref{1.3} holds for every $x \geq 2$. This conjecture was disproven by Littlewood \cite{littlewood} by showing that the function $\pi(x) - \text{li}(x)$ changes the sign infinitely many times as $x$ increases to infinity. More precisely, he proved that
\begin{displaymath}
\pi(x) - \text{li}(x) = \Omega_{\pm} \left( \frac{\sqrt{x} \log \log \log x}{\log x} \right). \tag{1.4} \label{1.4}
\end{displaymath}
Unfortunetely, Littlewood’s proof is nonconstructive and there is still no example of $x$ such that $\pi(x) > \text{li}(x)$. Under the assumption that the Riemann hypothesis on the nontrivial zeros of $\zeta(s)$ is true, several authors (see, for instance, Ingham \cite{ingham}, Prachar \cite{prachar}, and Kaczorowski \cite{kaczorowski}) asserted that
\begin{equation}
\int_2^x (\pi(t) - \text{li}(t)) \, \text{d}t < 0 \tag{1.5} \label{1.5}
\end{equation}
for all sufficiently large values of $x$. Stechkin and Popov \cite[Corollary 10]{stechkin} found that under the assumption that the Riemann hypothesis is true, one has
\begin{equation}
- \frac{0.714x^{3/2}}{\log x} < \int_2^x (\pi(t) - \text{li}(t)) \, \text{d}t < - \frac{0.62x^{3/2}}{\log x} \tag{1.6} \label{1.6}
\end{equation}
for all sufficiently large values of $x$. Pintz \cite{pintz} stated without proof that the inequality \eqref{1.5} is even a sufficient condition for the truth of the Riemann hypothesis. Recently, Johnston \cite[Theorem 1.1]{johnston2} was able to show the following Proposition.

\begin{prop}[Johnston] \label{prop101}
The Riemann hypothesis is true if and only if
\begin{equation}
\int_2^x (\pi(t) - \emph{li}(t)) \, \emph{d}t < 0 \q\q (x > 2). \tag{1.7} \label{1.7}
\end{equation}
\end{prop}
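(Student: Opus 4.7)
The strategy is to prove both directions via Riemann's explicit formula, combined with a Landau-type oscillation argument for the converse. Set $I(x) := \int_2^x (\pi(t) - \text{li}(t))\,\text{d}t$. Applying M\"obius inversion to Riemann's formula for the prime counting function yields
\[
\pi(x) - \text{li}(x) = -\sum_\rho \text{li}(x^\rho) + (\text{lower-order terms}),
\]
with $\rho$ ranging over the nontrivial zeros of $\zeta(s)$. Integrating termwise from $2$ to $x$ gives an explicit formula for $I(x)$; after integration by parts each zero contribution behaves, to leading order, like $x^{\rho+1}/(\rho(\rho+1)\log x)$.

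\textbf{Forward direction} (RH $\Rightarrow$ \eqref{1.7}). Under the Riemann hypothesis every nontrivial zero is $\rho = 1/2 + i\gamma$. Pairing each zero with its conjugate and taking real parts produces a real main term of size $-c\,x^{3/2}/\log x$ with $c > 0$, which is essentially the content of the Stechkin--Popov bound \eqref{1.6}. With explicit control of the error (both the truncation of the zero sum and the lower-order contributions to the explicit formula) one obtains $I(x) < 0$ for all $x \geq X_0$ for some concrete threshold $X_0$. For the residual range $2 < x \leq X_0$, the inequality is verified by direct computation: on each interval $[p_n, p_{n+1})$ between consecutive primes, $\pi$ is constant and $\text{li}$ is smooth, so $I$ is piecewise $C^1$, can be evaluated at primes from tabulated values of $\pi$ together with rigorous bounds for $\text{li}$, and interpolated on each interval using the sign-change structure of $\pi - \text{li}$ there.

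\textbf{Backward direction} (\eqref{1.7} $\Rightarrow$ RH). Argue by contradiction: assume RH fails, so there is a nontrivial zero $\rho_0$ of $\zeta$ with $\text{Re}(\rho_0) = \Theta > 1/2$. By the classical Landau oscillation theorem applied to $\pi(x) - \text{li}(x)$, the existence of such a $\rho_0$ forces $\pi(x) - \text{li}(x) = \Omega_\pm(x^\Theta/\log x)$. Integrating the oscillation, $I(x)$ cannot be of constant sign, contradicting $I(x) < 0$. A cleaner route is to apply Landau's theorem directly to the Mellin transform $M(s) := \int_2^\infty (-I(x))\,x^{-s}\,\text{d}x$, which under \eqref{1.7} has positive integrand. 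Abel summation and M\"obius inversion express $M(s)$ in terms of the prime zeta function, so $M$ is singular at each $s = 2 + \rho$. Failure of RH yields a complex singularity of $M$ with real part exceeding $5/2$ but no real singularity at the corresponding real part (since $\zeta$ has no real zero in the critical strip), contradicting Landau's conclusion that the abscissa of convergence of $M$ is a real singularity.

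The main obstacle is making the forward direction quantitatively explicit: the Stechkin--Popov bound \eqref{1.6} is only asserted for ``sufficiently large'' $x$ without a computable threshold, so one must refine their analysis to obtain a concrete $X_0$ within reach of numerical verification. This demands sharp bounds on partial sums over the nontrivial zeros of $\zeta(s)$ (leveraging the numerical verification of RH up to large heights) and careful control of the tails of the explicit formula; the subsequent numerical computation over $2 < x \leq X_0$ is routine but potentially delicate given how many primes fall in that range.
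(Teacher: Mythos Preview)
The paper does not prove Proposition~\ref{prop101}; it is quoted as a result of Johnston \cite[Theorem~1.1]{johnston2} and used as a black box (e.g., in the proof of Corollary~\ref{kor103}). So there is no ``paper's own proof'' to compare your proposal against.

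That said, a brief comment on your sketch. The forward direction is in the right spirit and indeed matches what the present paper later makes explicit (Theorem~\ref{thm102} and Corollary~\ref{kor103} give an effective $X_0$ under RH). For the converse, your first route has a genuine gap: from $\pi(x)-\text{li}(x)=\Omega_\pm(x^{\Theta}/\log x)$ you cannot simply ``integrate the oscillation'' to conclude that $I(x)$ changes sign---an $\Omega_\pm$ bound on $f$ does not transfer to $\int f$ without additional structure (frequencies could cancel upon integration). Your second route via Landau's theorem applied to the Mellin transform of $-I(x)$ is the standard and correct strategy, and is essentially how Johnston proceeds; the key point you identify, that $\zeta$ has no real zero in the open critical strip so the rightmost singularity cannot be real if RH fails, is exactly what drives the contradiction. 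One should also handle the case where $\Theta=\sup\{\text{Re}(\rho)\}$ is not attained, but this is a known refinement of the Landau argument rather than an obstacle.
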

 
In this paper, we first give the following result. Here, let $\psi(x) = \sum_{p^m \leq x} \log p$ denote Chebyshev's $\psi$-function and for $y \geq 2$ let
\begin{displaymath}
d_0(y) = \sum_{m=1}^{\lfloor \frac{\log y}{\log 2} \rfloor} \frac{\pi(y^{1/m})}{m} - \text{li}(y) - \frac{\psi(y)-y}{\log y} - \frac{1}{y \log^2 y} \int_{2}^y (\psi(t)-t) \, \text{d}t.
\end{displaymath}

\begin{thm} \label{thm102}
Let $c_0 = \log 2\pi$, $c_1 = 2 + (\zeta'/\zeta)(-1)$, $s=17^2$, and $\omega = \sum_{\rho} |\rho(\rho+1))|^{-1}$ where $\rho$ runs over all all nontrivial zeros of the Riemann zeta function. Further, let $\delta$ be a real positive number with $\delta \geq \omega$ and let
\begin{align*}
R_1(y) & = \int_{2}^y (\psi(t)-t) \, \emph{d}t, \\
f_1(x, \delta) & = \left( \frac{3\delta}{4}  - 1 \right) \emph{li}(x^{3/2}) - \frac{3\delta}{4} \, x \emph{li}(\sqrt{x}), \\
g(x) & = - 1.2762 \left( 3 \, \emph{li}(x^{3/2}) - \frac{2x^{3/2}}{\log x} + 4 \, \emph{li}(x^{4/3}) - \frac{3x^{4/3}}{\log x} \right) - \emph{li}(x^{4/3}) - \frac{5.1048x^{5/4}}{5\log 2}, \\ 
J_0(\delta) & = d_0(2) + \delta \left( \frac{3}{4} \, \emph{li}(\sqrt{2}) - \frac{\sqrt{2}(3\log 2+2)}{2 \log^2 2} \right) - c_0 \left( \frac{1}{\log 2} + \frac{1}{\log^2 2} \right), \\
J_1(\delta) & = \emph{li}(2^{3/2}) + g(2) - 2 j_0(2, \delta) - \frac{R_1(2)}{\log 2} - \frac{2c_0 + 2^{3/2}\delta}{\log 2} + \frac{3\delta}{4} (2 \emph{li}(\sqrt{2}) - \emph{li}(2^{3/2})), \\
f_2(x, \delta) & = \frac{3\delta}{4} \, x \emph{li}(\sqrt{x}) - \left( 1 + \frac{3\delta}{4} \right) \emph{li}(x^{3/2})), \\
J_2(\delta) & = d_0(s) - \delta \left( \frac{3}{4} \, \emph{li}(\sqrt{s}) - \frac{\sqrt{s}(3\log s+2)}{2 \log^2 s} \right) - c_0 \left( \frac{1}{\log s} + \frac{1}{\log^2 s} \right) + \frac{c_1}{s\log^2s}, \\
J_3(\delta) & = \emph{li}(s^{3/2}) + \int_2^s (\pi(t) - \emph{li}(t)) \, \emph{d}t -J_2(s, \delta) s - \frac{3\delta}{4} (s \emph{li}(\sqrt{s}) - \emph{li}(s^{3/2})) - \frac{R_1(s)}{\log s} + \frac{c_1 - c_0s + \delta s^{3/2}}{\log s}.
\end{align*}
Then, under the assumption that the Riemann hypothesis is true, we have
\begin{displaymath}
J_1(\delta) + J_0(\delta)x + f_1(x, \delta) + g(x) < \int_2^x (\pi(t) - \emph{li}(t)) \, \emph{d}t < J_3(\delta) + J_2(\delta) x + f_2(x, \delta),
\end{displaymath}
where the left-hand side inequality holds for every $x \geq 2$ and the right-hand side inequality holds for every $x \geq s$. 
\end{thm}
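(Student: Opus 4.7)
The plan is to reduce the bound to explicit estimates on $R_1(x) = \int_2^x (\psi(t)-t)\,dt$ under RH, together with effective Chebyshev bounds on the prime-power tail. The starting identity is a rearrangement of the definition of $d_0(y)$. Riemann's prime-counting function $\Pi(y) = \sum_{m=1}^{\lfloor \log y/\log 2\rfloor} \pi(y^{1/m})/m$ admits, through two Stieltjes integrations by parts against $\psi$, the decomposition
$$\Pi(y) - \text{li}(y) = \frac{\psi(y)-y}{\log y} + \frac{R_1(y)}{y \log^2 y} + \int_2^y \frac{R_1(u)(\log u + 2)}{u^2 \log^3 u}\, du + \kappa,$$
for an absolute constant $\kappa$, so that $d_0(y) - d_0(2)$ is the final integral. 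Using $\pi(y) = \Pi(y) - \sum_{m \geq 2} \pi(y^{1/m})/m$ and integrating from $2$ to $x$, after the identity $\int_2^x (\psi(t)-t)/\log t\,dt = R_1(x)/\log x + \int_2^x R_1(t)/(t \log^2 t)\,dt$, one obtains
$$\int_2^x (\pi(t) - \text{li}(t))\,dt = \int_2^x d_0(t)\,dt + \frac{R_1(x)}{\log x} + 2 \int_2^x \frac{R_1(t)}{t \log^2 t}\,dt - \int_2^x \sum_{m=2}^{\lfloor \log t / \log 2 \rfloor} \frac{\pi(t^{1/m})}{m}\,dt.$$

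The RH input enters through the classical explicit formula obtained by integrating $\psi(x) = x - \sum_\rho x^\rho/\rho - c_0 - \tfrac12\log(1-x^{-2})$ over $[2,x]$,
$$R_1(x) = -c_0 x - \sum_\rho \frac{x^{\rho+1}}{\rho(\rho+1)} + C(c_1) + O(1/x),$$
where $C(c_1)$ is a fixed constant expressible via $c_1 = 2 + (\zeta'/\zeta)(-1)$, the $O(1/x)$ piece coming from the tail of $\int_2^\infty \log(1-t^{-2})\,dt$. Under RH one has $|\sum_\rho x^{\rho+1}/(\rho(\rho+1))| \leq \omega x^{3/2} \leq \delta x^{3/2}$, so $R_1(x) \gtrless -c_0 x \pm \delta x^{3/2} + C(c_1) + O(1/x)$. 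Plugging these two-sided bounds into $R_1(x)/\log x$, into $\int_2^x R_1(t)/(t \log^2 t)\,dt$, and into $\int_2^x d_0(t)\,dt$ (the latter via $d_0'(t) = R_1(t)(\log t+2)/(t^2 \log^3 t)$), and using the key auxiliary identity $\int_2^x \text{li}(\sqrt{t})\,dt = x\,\text{li}(\sqrt{x}) - \text{li}(x^{3/2}) - 2\,\text{li}(\sqrt{2}) + \text{li}(2^{3/2})$, should account for the pieces $f_1, f_2$ (including the characteristic coefficient $3\delta/4$ paired with $(\text{li}(x^{3/2}), x\,\text{li}(\sqrt{x}))$) and for the parts of $J_0, J_2$ involving $\sqrt{2}(3\log 2 + 2)/(2\log^2 2)$.

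The prime-power tail $\int_2^x \sum_{m \geq 2} \pi(t^{1/m})/m\,dt$ must then be handled termwise: the $m=2$ piece combines with the $R_1$-bound (through the same integration-by-parts identity) to close the $\text{li}(x^{3/2})$-coefficient into its final form, while for $m = 3, 4$ I would use an effective Chebyshev-type upper bound of the form $\pi(y) \leq 1.2762\, y/\log y$ on the required range, producing the constants $1.2762$ and $5.1048 = 4 \cdot 1.2762$ in $g(x)$ together with the pieces $\text{li}(x^{4/3})$, $x^{4/3}/\log x$, $x^{5/4}/(5\log 2)$ after termwise integration. The $m \geq 5$ tail is $O(x^{6/5})$ and is absorbed into the constants.

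The main obstacle will be the intricate bookkeeping of constants: every boundary contribution ($d_0(2)$, the terms $c_0(1/\log 2 + 1/\log^2 2)$, $c_1/(s\log^2 s)$, the Chebyshev boundary values at $t=2$ and $t=s$, etc.) must be tracked precisely through each integration by parts so as to reproduce the exact forms of $J_0(\delta), J_1(\delta), J_2(\delta), J_3(\delta)$. The asymmetry between the lower bound (valid for $x \geq 2$) and the upper bound (requiring $x \geq s = 17^2$) reflects the need for the Chebyshev-type upper estimate in the right-hand direction to be sharp only beyond $s$; the contribution $\int_2^s (\pi(t) - \text{li}(t))\,dt$ on the missing initial segment is absorbed explicitly into $J_3(\delta)$, which is exactly why this integral appears as one of its summands.
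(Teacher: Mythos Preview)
Your outline is essentially the paper's proof: decompose $\pi = \Pi - \sum_{m\ge 2}\pi(t^{1/m})/m$, control $\int(\Pi-\text{li})$ via the Ingham identity together with the RH bound $|R_1(x)+c_0x-c_1|\le \delta x^{3/2}$, and handle the prime-power tail by elementary $\pi$-estimates. Three points of detail need correcting so that your bookkeeping actually reproduces $g(x)$ and the constants.

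First, the constant $1.2762$ in $g(x)$ is not a Chebyshev leading constant but the second-order coefficient in Dusart's $\pi(y)<y/\log y+1.2762\,y/\log^2 y$; applied to $y=\sqrt{t}$ and $y=t^{1/3}$ this produces the $\sqrt{t}/\log^2 t$ and $t^{1/3}/\log^2 t$ pieces that integrate to the terms $3\,\text{li}(x^{3/2})-2x^{3/2}/\log x$ and $4\,\text{li}(x^{4/3})-3x^{4/3}/\log x$ in $g(x)$. Second, the tail $m\ge 4$ is not split into $m=4$ plus an $O(x^{6/5})$ remainder ``absorbed into constants'' (that remainder is not bounded); rather one bounds the whole block by $M(t)\pi(t^{1/4})/4\le (\log t/\log 2)\cdot 1.26\,t^{1/4}/((1/4)\log t)\cdot \tfrac14 = 1.26\,t^{1/4}/\log 2$, whose integral gives the $x^{5/4}/(5\log 2)$ term. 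Third, the reason the upper bound starts at $s=17^2$ is that one needs the \emph{lower} bound $\pi(y)\ge y/\log y$ (Rosser--Schoenfeld, valid for $y\ge 17$) to get $\Pi(t)-\pi(t)\ge \sqrt{t}/\log t$ for $t\ge s$; it is not a question of an upper Chebyshev estimate being sharp only beyond $s$.
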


Since neither a closed formula nor the exact value of $\omega = \sum_{\rho} |\rho(\rho+1))|^{-1}$ is known, we can combine a well known upper bound for $\omega$ (see \eqref{2.12}) with the inequalities given in Theorem \ref{thm102} to establish the following effective estimates for the integral in \eqref{1.7} which on the one hand give an explicit version of \eqref{1.6} and on the other hand provide a more precise suffient and necessary criterion for the Riemann hypothesis compared to Proposition \ref{prop101}.

\begin{kor} \label{kor103}
Let $\lambda_0 = 2 + \gamma - \log 4\pi = 0.0461914\ldots$. The Riemann hypothesis is true if and only if
\begin{displaymath}
- \frac{x^{3/2}}{\log x} < \int_2^x (\pi(t) - \emph{li}(t)) \, \emph{d}t < \left( \lambda_0 - \frac{2}{3} \right) \times \frac{x^{3/2}}{\log x},
\end{displaymath}
where the left-hand side inequality holds for every $x \geq 2,258,093,575$ and the right-hand side inequality holds for every $x \geq 139,537,375$. In particular, the right-hand side inequality in \eqref{1.6} holds for every $x \geq 4.82 \times 10^{863}$.
\end{kor}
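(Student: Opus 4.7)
The plan is to apply Theorem \ref{thm102} with an explicit admissible value of $\delta$, simplify each of the resulting bounds into the closed form stated in the corollary, and invoke Johnston's criterion (Proposition \ref{prop101}) for the converse direction of the biconditional.

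First I would establish $\omega \leq \lambda_0$. By the Riemann--von Mangoldt identity $\sum_\rho 1/(\rho(1-\rho)) = 2 + \gamma - \log 4\pi = \lambda_0$ (which underlies \eqref{2.12}), together with the observation that under RH every nontrivial zero $\rho = 1/2 + it$ satisfies $|\rho(\rho+1)| = |\rho|\sqrt{|\rho|^2+2} \geq |\rho|^2 = \rho(1-\rho)$, we obtain $\omega \leq \lambda_0$, so $\delta = \lambda_0$ is admissible in Theorem \ref{thm102}. Substituting this $\delta$ and using the standard expansions $\text{li}(x^{3/2}) = \tfrac{2}{3}\tfrac{x^{3/2}}{\log x} + \tfrac{4}{9}\tfrac{x^{3/2}}{\log^2 x} + O(\tfrac{x^{3/2}}{\log^3 x})$ and $x\,\text{li}(\sqrt{x}) = 2\tfrac{x^{3/2}}{\log x} + 4\tfrac{x^{3/2}}{\log^2 x} + O(\tfrac{x^{3/2}}{\log^3 x})$, direct computation yields $f_2(x,\lambda_0) = (\lambda_0 - \tfrac{2}{3})\tfrac{x^{3/2}}{\log x} + (\tfrac{8\lambda_0}{3} - \tfrac{4}{9})\tfrac{x^{3/2}}{\log^2 x} + \text{(lower order)}$ and $f_1(x,\lambda_0) + g(x) = -(\lambda_0 + \tfrac{2}{3})\tfrac{x^{3/2}}{\log x} + \text{(lower order)}$, while the $J_i(\lambda_0)$ quantities are either constants or grow linearly. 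Since $\lambda_0 + \tfrac{2}{3} < 1$, the target constants in the corollary asymptotically dominate the leading terms.

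To pin down the explicit thresholds I would form
\[
H_R(x) := (\lambda_0 - \tfrac{2}{3})\tfrac{x^{3/2}}{\log x} - \bigl(J_3(\lambda_0) + J_2(\lambda_0)x + f_2(x,\lambda_0)\bigr),
\]
\[
H_L(x) := \bigl(J_1(\lambda_0) + J_0(\lambda_0)x + f_1(x,\lambda_0) + g(x)\bigr) + \tfrac{x^{3/2}}{\log x}.
\]
The leading $x^{3/2}/\log x$ terms in $H_R$ cancel, so $H_R(x) \sim (\tfrac{4}{9} - \tfrac{8\lambda_0}{3})\tfrac{x^{3/2}}{\log^2 x} \approx 0.32\,\tfrac{x^{3/2}}{\log^2 x}$ competes against the linear $J_2(\lambda_0)x$, while $H_L(x) \sim (1 - \lambda_0 - \tfrac{2}{3})\tfrac{x^{3/2}}{\log x} \approx 0.29\,\tfrac{x^{3/2}}{\log x}$. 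Both functions are ultimately positive and monotone, so a finite numerical root search using interval arithmetic on the constants $c_0, c_1, d_0(s), R_1(s)$ and on $\text{li}(\cdot)$ pins down the thresholds $139{,}537{,}375$ and $2{,}258{,}093{,}575$.

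Finally, for the converse direction of the biconditional I would assume the two inequalities hold. Since $\lambda_0 - \tfrac{2}{3} < 0$, the upper bound forces $\int_2^x (\pi(t) - \text{li}(t))\,dt < 0$ for every $x \geq 139{,}537{,}375$; for $2 < x < 139{,}537{,}375$ the strict inequality $\pi(t) < \text{li}(t)$ is known throughout (verified numerically well past $10^{14}$), so $\int_2^x (\pi(t) - \text{li}(t))\,dt < 0$ by direct integration. Thus $\int_2^x (\pi(t) - \text{li}(t))\,dt < 0$ for every $x > 2$, and Proposition \ref{prop101} yields RH. The closing assertion about the right-hand side of \eqref{1.6} is an entirely parallel root search for the same upper bound with $\lambda_0 - \tfrac{2}{3}$ replaced by $-0.62$, producing the threshold $x \geq 4.82 \times 10^{863}$. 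The main obstacle throughout is the precision of this threshold search: in $H_R$ the leading $x^{3/2}/\log x$ contribution cancels, so the dominant signal is only $x^{3/2}/\log^2 x$, and any slack in the auxiliary constants or in the effective expansion of $\text{li}$ would inflate the quoted explicit thresholds considerably.
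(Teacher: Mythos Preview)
Your proposal is correct and follows essentially the same route as the paper: substitute $\delta=\lambda_0$ (justified by \eqref{2.12}) into Theorem~\ref{thm102}, reduce to a numerical threshold computation for the two displayed bounds and for the $-0.62$ bound of \eqref{1.6}, and invoke Proposition~\ref{prop101} for the converse. Your write-up is in fact more explicit than the paper's, which compresses the threshold search to ``a simple computation'' and the converse to ``follows directly from Proposition~\ref{prop101}''; your handling of the range $2<x<139{,}537{,}375$ via the unconditional inequality $\pi(t)<\text{li}(t)$ fills in a step the paper leaves implicit.
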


\begin{rema}
Note that the positive integer $N_0 = 2,258,093,575$ might not be the smallest positive integer so that the left-hand side inequality in Corollary \ref{kor103} holds for every $x \geq N_0$. The same statement holds also for $N_1 = 139,537,375$ and $N_2 = 4.82 \times 10^{863}$.
\end{rema}

Although Littlewood \cite{littlewood} could exhibit that the function $\pi(x) - \text{li}(x)$ changes its sign infinitely often as $x$ increases to infinity, it can be shown that certain mean values of $\pi(x) - \text{li}(x)$ are negative even without the assumption that the Riemann hypotesis is true. Pintz \cite{pintz} utilized standard complex analysis methods to prove that
\begin{displaymath}
\int_1^{\infty} (\pi(t) - \text{li}(t)) \exp \left( - \frac{\log^2t}{y} \right) \, \text{d}t < 0
\end{displaymath}
for all sufficiently large values of $y$. Using an upper bound for the sum of the reciprocals of all prime numbers $p$ not exceeding $x$,
Johnston \cite[p.\:7]{johnston2} was recently able to show that
\begin{equation}
\int_2^x \frac{\pi(t) - \text{li}(t)}{t^2} \, \text{d}t < \frac{2.5}{\log^2 x} - 0.62 \tag{1.8} \label{1.8}
\end{equation}
for every $x > 200$. It follows in particular that the integral in \eqref{1.8} is negative for every $x > 2$. In the next theorem, we also utilize effective estimates for the sum of the reciprocals of all prime numbers $p$ not exceeding $x$ to improve the inequality \eqref{1.8} on the one hand and to determine a lower bound for the integral in \eqref{1.8} on the other hand.

\begin{thm} \label{thm104}
Let $B$ be \textit{the Mertens' constant}; i.e.
\begin{equation}
B = \gamma + \sum_{p} \left( \log \left( 1 - \frac{1}{p} \right) + \frac{1}{p} \right) = 0.26149 \ldots , \tag{1.9} \label{1.9}
\end{equation}
where $\gamma = 0.577215\ldots$ denotes the Euler-Mascheroni constant, and let $C$ be defined by
\begin{equation}
C = B - \frac{\emph{li}(e)}{e} - \int_2^e \frac{\emph{li}(t)}{t^2} \, \emph{d}t = -0.62759\ldots. \tag{1.10} \label{1.10}
\end{equation}
Then, for every $x \geq 1,757,126,630,797$, we have
\begin{displaymath}
C - \frac{0.0100757}{\log^3 x} < \int_2^x \frac{\pi(t) - \emph{li}(t)}{t^2} \, \emph{d}t < C + \frac{0.0101517}{\log^3 x}.
\end{displaymath}
In particular, one has
\begin{displaymath}
\int_2^{\infty} \frac{\pi(t) - \emph{li}(t)}{t^2} \, \emph{d}t = C.
\end{displaymath}
\end{thm}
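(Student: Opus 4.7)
The plan is to reduce the integrand to the classical Mertens sum $\sum_{p\leq x}1/p$ and then invoke effective bounds for the two resulting error terms. First I would apply Abel summation to the indicator of primes, which gives
\[
\sum_{p\leq x}\frac{1}{p}=\frac{\pi(x)}{x}+\int_2^x\frac{\pi(t)}{t^2}\,\text{d}t,
\]
while integration by parts with $\text{li}'(t)=1/\log t$ yields
\[
\int_2^x\frac{\text{li}(t)}{t^2}\,\text{d}t=-\frac{\text{li}(x)}{x}+\frac{\text{li}(2)}{2}+\log\log x-\log\log 2.
\]
Subtracting these and rearranging produces the key exact identity
\[
\int_2^x\frac{\pi(t)-\text{li}(t)}{t^2}\,\text{d}t \;=\; C \;+\; E_M(x) \;-\; \frac{\pi(x)-\text{li}(x)}{x},
\]
where $E_M(x):=\sum_{p\leq x}1/p-B-\log\log x$ is the Mertens remainder. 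That the constant is precisely the $C$ of \eqref{1.10} is a matter of running the same integration by parts on $[2,e]$: because $\log\log e=0$, the definition of $C$ collapses to $C=B-\tfrac{1}{2}\text{li}(2)+\log\log 2$, which matches the constant produced by the subtraction.

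The two inequalities in the theorem then reduce to inserting explicit bounds for $E_M(x)$ and for $(\pi(x)-\text{li}(x))/x$. For the first I would use an effective Mertens-type inequality of the shape $|E_M(x)|\leq a/\log^3 x$ valid beyond an explicit threshold, of the kind already available in the literature on explicit estimates for sums over primes. For the second I would invoke an explicit unconditional form of the prime number theorem, say $|\pi(x)-\text{li}(x)|\leq b\,x/\log^N x$ with $N\geq 4$, which makes $(\pi(x)-\text{li}(x))/x$ negligible on the scale $1/\log^3 x$ at the threshold $x\geq 1{,}757{,}126{,}630{,}797$. The slight asymmetry between the lower constant $0.0100757$ and the upper constant $0.0101517$ arises because the term $-(\pi(x)-\text{li}(x))/x$ enters with opposite signs in the two inequalities, so the optimisation of constants is not perfectly symmetric.

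The \emph{in particular} statement is obtained on letting $x\to\infty$ in the displayed identity: both $E_M(x)$ (by Mertens' theorem) and $(\pi(x)-\text{li}(x))/x$ (by the prime number theorem) tend to $0$, so the improper integral converges to $C$. The main obstacle will be purely numerical: matching the displayed constants $0.0100757$ and $0.0101517$ and verifying that $x=1{,}757{,}126{,}630{,}797$ is far enough out that both effective estimates deliver sufficient slack simultaneously. This is bookkeeping whose quality is dictated entirely by the sharpness of the available explicit Mertens inequality, with the PNT contribution serving only as a lower-order correction.
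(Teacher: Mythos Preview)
Your proposal is correct and mirrors the paper's proof: the paper establishes the exact identity \eqref{3.2} via Abel summation \eqref{3.3} and integration by parts \eqref{3.4} (Lemma~\ref{lem302}), then plugs in the explicit Mertens bound of Lemma~\ref{lem301} and the asymmetric bounds on $\text{li}(x)-\pi(x)$ from Lemma~\ref{lem303}. The asymmetry you noticed indeed comes from the two different constants $0.024965$ and $0.02711$ in Lemma~\ref{lem303}, and the threshold $1{,}757{,}126{,}630{,}797$ is inherited from the Mertens estimate in \cite{axlernew}.
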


After Johnston \cite[Theorem 1.3]{johnston2} has shown that
\begin{equation}
\int_2^x \frac{(\pi(t) - \text{li}(t))}{t^2} \, \text{d}t < 0 \tag{1.11} \label{1.11}
\end{equation}
for every $x > 2$, it is natural to ask whether the same statement is also true for the integral
\begin{equation}
\int_2^x \frac{(\pi(t) - \text{li}(t))}{t^c} \, \text{d}t, \tag{1.12} \label{1.12}
\end{equation}
where $c$ is a real number with $c < 2$. Here, Johnston \cite[Theorem 1.4]{johnston2} showed that one cannot do much better than \eqref{1.11} without further knowledge of the location of the nontrivial zeros of the Riemann zeta function. More precisely, he set $\omega = \sup \{ \text{Re}(s) \mid \zeta(s) = 0 \}$ and showed under the assumption that $1/2 < \omega \leq 1$ and $c < 1 + \omega$ that there exist arbitrarily large values of $x$ such that the integral \eqref{1.12} is positive. In the same paper, Johnston \cite[p.\:3]{johnston2} asked if it was possible to use a slightly asymptotically larger weight than $f(t) = 1/t^2$ in \eqref{1.11} and proposed $f(t) = \log(t)/t^2$. In the following theorem, we give effective estimates for the integral
\begin{equation}
\int_2^x \frac{(\pi(t) - \text{li}(t)) \log t}{t^2} \, \text{d}t \tag{1.13} \label{1.13}
\end{equation}
and furthermore determine its limit value for $x \to \infty$.

\begin{thm} \label{thm105}
Let $B$ be defined as in \eqref{1.9} and let
\begin{equation}
E = - \gamma - \sum_{p} \frac{\log p}{p(p-1)} = -1.3325 \ldots. \tag{1.14} \label{1.14}
\end{equation}
Further, we set
\begin{displaymath}
K = B + E + \log 2 + \log \log 2 - \frac{(1+\log 2)\emph{li}(2)}{2} = -1.62925885667\ldots.
\end{displaymath}
Then for every $x \geq 1,757,126,630,797$, one has
\begin{displaymath}
K - \frac{0.014262}{\log^2 x} < \int_2^x \frac{(\pi(t) - \emph{li}(t)) \log t}{t^2} \, \emph{d}t < K + \frac{0.014352}{\log^2 x}.
\end{displaymath}
In particular, we have
\begin{displaymath}
\int_2^{\infty} \frac{(\pi(t) - \emph{li}(t)) \log t}{t^2} \, \emph{d}t = K.
\end{displaymath}
\end{thm}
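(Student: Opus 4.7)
The plan is to decompose $I(x) := \int_2^x \frac{(\pi(t) - \text{li}(t)) \log t}{t^2}\, \text{d}t$ into a piece involving $\text{li}(t)$, which can be evaluated in closed form, and a piece involving $\pi(t)$, which can be converted into explicit Mertens-type prime sums via Abel summation. First I would compute the deterministic integral by integration by parts, taking $u = \text{li}(t)$ and $\text{d}v = (\log t)/t^2\, \text{d}t$ (so that $v = -(\log t + 1)/t$ and $\text{d}u = \text{d}t/\log t$). The remaining integral collapses to $\int_2^x (1/t + 1/(t\log t))\, \text{d}t$, giving
\begin{displaymath}
\int_2^x \frac{\text{li}(t)\log t}{t^2}\, \text{d}t = -\frac{(\log x + 1)\text{li}(x)}{x} + \log x + \log\log x + \frac{(1+\log 2)\text{li}(2)}{2} - \log 2 - \log\log 2.
\end{displaymath}

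For the piece involving $\pi(t)$, I would use Abel summation applied to $f(t) = -(\log t + 1)/t$ (so that $f'(t) = \log t/t^2$). Since $\sum_{p \leq x} f(p) = f(x)\pi(x) - \int_2^x \pi(t) f'(t)\, \text{d}t$, this produces
\begin{displaymath}
\int_2^x \frac{\pi(t)\log t}{t^2}\, \text{d}t = \sum_{p \leq x}\frac{\log p}{p} + \sum_{p \leq x}\frac{1}{p} - \frac{(\log x + 1)\pi(x)}{x}.
\end{displaymath}
Subtracting the two identities and regrouping, the boundary constants at $t = 2$ combine with the definitions of $B$ in \eqref{1.9} and $E$ in \eqref{1.14} to reproduce exactly $K$. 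This yields the exact identity
\begin{displaymath}
I(x) = \Bigl(\sum_{p \leq x}\tfrac{\log p}{p} - \log x - E\Bigr) + \Bigl(\sum_{p \leq x}\tfrac{1}{p} - \log\log x - B\Bigr) - \frac{(\log x + 1)(\pi(x) - \text{li}(x))}{x} + K.
\end{displaymath}
The limit assertion $I(x) \to K$ is then immediate, as each of the three bracketed remainders vanishes by the defining convergence of Mertens' constants and the prime number theorem.

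For the effective two-sided bounds, I would insert the sharp explicit estimates for the three remainders: a version of Mertens' second theorem giving $|\sum_{p \leq x} 1/p - \log\log x - B| \ll 1/\log^3 x$, the companion explicit bound $|\sum_{p \leq x} \log p/p - \log x - E| \ll 1/\log^2 x$, and an effective form of the prime number theorem controlling $|\pi(x) - \text{li}(x)|$; all of these are available from work of Dusart and the author's earlier papers. Combining them and verifying that the total error fits inside the stated envelope $\pm 0.014262/\log^2 x$ (resp.\ $0.014352/\log^2 x$) produces the inequalities of the theorem.

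The main obstacle will be the last step: calibrating the three explicit remainders so that their combined contribution respects the very tight constants $0.014262$ and $0.014352$ from the threshold $x \geq 1{,}757{,}126{,}630{,}797$ onward. The dominant term is the remainder for $\sum_{p \leq x} \log p/p - \log x - E$, which has exactly the $1/\log^2 x$ order of the target bound, so the available constants must be essentially optimal; the term $(\log x + 1)(\pi(x) - \text{li}(x))/x$ is harmless in this range provided an explicit PNT estimate with sufficiently small constant is used, and the $\sum 1/p$ contribution is of smaller order. The specific threshold $1{,}757{,}126{,}630{,}797$ presumably matches the validity range of one of the invoked explicit Mertens estimates, and the asymmetry between $0.014262$ and $0.014352$ reflects the asymmetry of the best available two-sided bounds on the dominant remainder.
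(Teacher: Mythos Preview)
Your proposal is correct and mirrors the paper's own proof: the paper first establishes exactly your identity $I(x)=K+A_1(x)+A_2(x)+\frac{(\text{li}(x)-\pi(x))(\log x+1)}{x}$ (its Proposition \ref{prop402}, proved via Abel summation and the same integration by parts), and then inserts the explicit bounds of Lemmata \ref{lem301}, \ref{lem401}, and \ref{lem303} for the three remainders. Your diagnosis of which term dominates, why the threshold $1{,}757{,}126{,}630{,}797$ appears, and why the two constants are asymmetric is also spot on.
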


We get the following corollary which states that the integral \eqref{1.13} is indeed negative for every $x \geq 2$.

\begin{kor} \label{kor106}
For every $x > 2$, we have
\begin{displaymath}
\int_2^{\infty} \frac{(\pi(t) - \emph{li}(t)) \log t}{t^2} \, \emph{d}t < 0.
\end{displaymath}
\end{kor}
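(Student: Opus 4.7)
The plan is to split the range $x > 2$ at the threshold $x_0 = 1{,}757{,}126{,}630{,}797$ appearing in Theorem \ref{thm105}. For $x \geq x_0$ the desired negativity drops out of the effective upper bound already proved, whereas for $2 < x < x_0$ one argues directly from the pointwise sign of the integrand.

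For $x \geq x_0$, I would simply feed $x$ into the right-hand inequality of Theorem \ref{thm105} to obtain
\begin{displaymath}
\int_2^x \frac{(\pi(t) - \text{li}(t)) \log t}{t^2} \, \text{d}t < K + \frac{0.014352}{\log^2 x}.
\end{displaymath}
Since $\log x \geq \log x_0 > 28$, the error term is bounded by $0.014352/28^2 < 2 \times 10^{-5}$, while $K = -1.62925885667\ldots$ is comfortably negative. Hence the entire right-hand side is negative, which settles every $x \geq x_0$.

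For $2 < x < x_0$ I would invoke the computationally verified form of Gauss's inequality $\pi(t) \leq \text{li}(t)$ throughout $[2, x_0]$; this range is well below the values up to which $\pi(t) \leq \text{li}(t)$ has been numerically confirmed in the literature (in particular, the verified range comfortably exceeds $10^{12}$). Granted this, the integrand is nonpositive on $[2, x]$, and strictly negative on a set of positive measure (for instance near $t = 2$, where $\pi(2) = 1 < \text{li}(2)$), so the integral is strictly negative.

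The only external ingredient beyond Theorem \ref{thm105} is the computational verification $\pi(t) \leq \text{li}(t)$ on $[2, x_0]$; everything else is routine bookkeeping with the effective bound and an elementary sign observation at $t = 2$. No appeal to the Riemann hypothesis is needed, which matches the unconditional character of Theorem \ref{thm105}.
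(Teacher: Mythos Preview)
Your argument is correct and matches the paper's proof essentially verbatim: the paper also splits at $x_0 = 1{,}757{,}126{,}630{,}797$, invokes Theorem \ref{thm105} for $x \geq x_0$, and for $2 < x < x_0$ appeals to the computationally verified inequality $\pi(t) < \text{li}(t)$ on that range (citing specifically B\"uthe \cite[Theorem 2]{buthe2018}, whose verified range extends to $10^{19}$). The only difference is that you leave the reference for the small-$x$ verification unspecified, whereas the paper names it.
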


Next, we consider Chebyshev's $\vartheta$-function
\begin{displaymath}
\vartheta(x) = \sum_{p \leq x} \log p,
\end{displaymath}
where $p$ runs over all primes not exceeding $x$. Chebyshev's $\vartheta$-function and the prime counting function $\pi(x)$ are connected by the identities
\begin{align}
\pi(x) &= \frac{\vartheta(x)}{\log x} + \int_{2}^{x}{\frac{\vartheta(t)}{t \log^{2} t}\ \text{d}t}, \tag{1.15} \label{1.15} \\
\vartheta(x) & = \pi(x) \log x - \int_{2}^{x}{\frac{\pi(t)}{t}\ \text{d}t}, \tag{1.16} \label{1.16}
\end{align}
which holds for every $x \geq 2$ (see \cite[Theorem 4.3]{ap}). If we combine \eqref{1.2} and \eqref{1.16}, we see that
\begin{displaymath}
\vartheta(x) = x + O(x e^{-c_1\sqrt{\log x}}) \q\q (x \to \infty),
\end{displaymath} 
where $c_1$ is a positive absolute constant. Similar to \eqref{1.4}, Littlewood \cite{littlewood} showed that
\begin{displaymath}
\vartheta(x) - x = \Omega_{\pm}( \sqrt{x} \log \log \log x)
\end{displaymath}
which implies that the function $\vartheta(x) - x$ changes the sign infinitely many times as $x$ increases to infinity. Nonetheless, it can be shown that certain mean values of $\vartheta(x) - x$ are negative. Similar to Proposition \ref{prop101}, Johnston \cite[Theorem 1.2]{johnston2} showed that the negativity of the integral
\begin{equation}
\int_2^x (\vartheta(t) - t) \, \text{d}t \tag{1.17} \label{1.17}
\end{equation}
for all $x \geq 2$ is a necessary and sufficient condition for the truth of the Riemann hypothesis. In order to refine this equivalent criterion, we first give the following result.

\begin{thm} \label{thm107}
Let $\omega = \sum_{\rho} |\rho(\rho+1))|^{-1}$ where $\rho$ runs over all all nontrivial zeros of the Riemann zeta function. Then, under the assumption that the Riemann hypothesis is true, one has
\begin{displaymath}
-\left( \omega + \frac{2}{3} \right) \, x^{3/2} - x^{4/3} - x\log 2\pi < \int_2^x (\vartheta(t) - t) \, \emph{d}t < \left( \omega - \frac{2}{3} \right) \, x^{3/2} - x \log 2\pi,
\end{displaymath}
where the left-hand side inequality holds for every $x \geq 2$ and the right-hand side inequality holds for every $x \geq 121$.
\end{thm}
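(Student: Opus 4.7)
The plan is to integrate the standard decomposition $\vartheta(t) - t = (\psi(t) - t) - \sum_{k \geq 2} \vartheta(t^{1/k})$ from $2$ to $x$, obtaining
\[
\int_2^x (\vartheta(t) - t)\,\text{d}t \;=\; \int_2^x (\psi(t) - t)\,\text{d}t \;-\; \sum_{k \geq 2} \int_{2^k}^x \vartheta(t^{1/k})\,\text{d}t,
\]
where the sum on the right is finite since $\vartheta(t^{1/k}) = 0$ for $t < 2^k$. The first piece will be controlled by Riemann's explicit formula under RH, and the tail by elementary estimates on $\vartheta$.

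For $\int_2^x (\psi(t) - t)\,\text{d}t$ I would apply the classical explicit formula
\[
\psi(t) \;=\; t - \sum_\rho \frac{t^\rho}{\rho} - \log 2\pi - \tfrac{1}{2}\log(1 - t^{-2}),
\]
valid for all $t > 1$ away from the measure-zero set of prime powers. Integrating term by term yields
\[
\int_2^x (\psi(t) - t)\,\text{d}t \;=\; -\sum_\rho \frac{x^{\rho+1} - 2^{\rho+1}}{\rho(\rho+1)} - (x-2)\log 2\pi - \tfrac{1}{2}\int_2^x \log(1-t^{-2})\,\text{d}t,
\]
and under RH every $\rho$ satisfies $|x^{\rho+1}| = x^{3/2}$, so by the very definition of $\omega$ the zero sum is bounded in absolute value by $\omega x^{3/2}$. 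The boundary sum at $t = 2$ contributes an absolute constant of size at most $2^{3/2}\omega$, and the remaining log-integral converges to a small constant as $x \to \infty$.

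For the tail, the dominant case is $k = 2$: writing $\vartheta(\sqrt t) = \sqrt t + (\vartheta(\sqrt t) - \sqrt t)$ and invoking the standard RH-bound $|\vartheta(y) - y| \ll y^{1/2}\log^2 y$ gives $\int_4^x \vartheta(\sqrt t)\,\text{d}t = \tfrac{2}{3}x^{3/2} + O(x^{5/4}\log^2 x)$. For $k \geq 3$ the Chebyshev-type bound $0 \leq \vartheta(y) \leq A y$ with $A$ slightly larger than $1$, combined with $\int_{2^k}^x t^{1/k}\,\text{d}t = \tfrac{k}{k+1}\bigl(x^{(k+1)/k} - 2^{k+1}\bigr)$, shows that the total contribution from $k \geq 3$ is of order $x^{4/3}$. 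For the upper bound one then drops the non-negative $k \geq 3$ tails entirely (they only weaken the right-hand side) and keeps only a lower estimate on the $k = 2$ integral, producing $(\omega - \tfrac{2}{3})x^{3/2} - x\log 2\pi$ plus lower-order terms that are non-positive once $x \geq 121$; for the lower bound one uses upper estimates for every $\int \vartheta(t^{1/k})$, and the explicit cushion $-x^{4/3}$ in the statement is precisely what absorbs the $k \geq 3$ contributions and all residual error terms uniformly for $x \geq 2$.

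The main obstacle is the explicit numerical bookkeeping around the thresholds $x = 2$ and $x = 121$: one must quantify the convergent log-integral, the boundary sum at $t = 2$, the $O(x^{5/4}\log^2 x)$ error from the $k = 2$ piece, and the precise form of the $k \geq 3$ sum, and then verify that all these lower-order terms fit inside the explicit margins $-x\log 2\pi$ and $-x^{4/3}$, typically by a direct calculation at the threshold value combined with a monotonicity argument that propagates the inequality to all larger $x$.
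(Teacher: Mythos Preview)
Your decomposition $\vartheta(t)-t=(\psi(t)-t)-(\psi(t)-\vartheta(t))$ is exactly the one the paper uses, and your treatment of $\int_2^x(\psi(t)-t)\,\text{d}t$ via the explicit formula is equivalent to the paper's Lemma \ref{lem201}. The divergence is in how you handle $\psi(t)-\vartheta(t)=\sum_{k\ge 2}\vartheta(t^{1/k})$. The paper does not split off $k=2$ and invoke the RH bound $|\vartheta(y)-y|\ll y^{1/2}\log^2 y$; instead it quotes Nicolas's elementary inequalities
\[
\sqrt{t}<\psi(t)-\vartheta(t)<\sqrt{t}+\tfrac{4}{3}\,t^{1/3}
\]
(the left valid for $t\ge 121$, the right for all $t\ge 1$), integrates them directly, and is done. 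This is where the threshold $121=11^2$ actually comes from, and why the term $-x^{4/3}$ appears with coefficient exactly $1$: it is the antiderivative of $\tfrac{4}{3}\,t^{1/3}$, not a cushion absorbing miscellaneous errors.

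Your route has a genuine gap in the upper bound. Using $\vartheta(\sqrt{t})\ge\sqrt{t}-c\,t^{1/4}\log^2 t$ and then, as you say, dropping the non-negative $k\ge 3$ tails, you obtain
\[
\int_2^x(\vartheta(t)-t)\,\text{d}t\le\Bigl(\omega-\tfrac{2}{3}\Bigr)x^{3/2}-x\log 2\pi+O(1)+c'\,x^{5/4}\log^2 x.
\]
The last term is \emph{positive} and grows faster than $x$, so it cannot be absorbed into the target bound $(\omega-\tfrac{2}{3})x^{3/2}-x\log 2\pi$, whose only slack beyond the $x^{3/2}$ term is $-x\log 2\pi$. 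Your assertion that the residual lower-order terms are ``non-positive once $x\ge 121$'' is therefore false as written. The fix is not to drop the $k\ge 3$ tails but to retain a lower bound for $\psi(t)-\vartheta(t)$ as a whole---which is precisely what Nicolas's inequality $\psi(t)-\vartheta(t)>\sqrt{t}$ supplies, without any $\log^2$ factor and without assuming RH a second time.
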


As in the proof of Corollary \ref{kor103}, we combine a well known upper bound for $\omega$ (see \eqref{2.12}) with the inequalities given in Theorem \ref{thm107} to establish the following more precise version of \eqref{1.17}.
 
\begin{kor} \label{kor108}
Let $\lambda_0 = 2 + \gamma - \log 4\pi = 0.0461914\ldots$. The Riemann hypothesis is true if and only if
\begin{displaymath}
- 0.713 \times x^{3/2} < \int_2^x (\vartheta(t) - t) \, \emph{d}t < \left( \lambda_0 - \frac{2}{3} \right) \, x^{3/2}, \tag{1.18} \label{1.18}
\end{displaymath}
where the left-hand side inequality holds for every $x \geq 1.224117 \times 10^{23}$ and the right-hand side inequality holds for every $x \geq 10$.
\end{kor}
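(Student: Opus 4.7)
The plan is to combine Theorem \ref{thm107} with the explicit upper bound $\omega \leq \lambda_0$ supplied by \eqref{2.12} in order to establish the ``only if'' direction, and to derive the ``if'' direction from Johnston's criterion (Theorem 1.2 of \cite{johnston2}) characterising RH via the negativity of $\int_2^x (\vartheta(t)-t)\,\mathrm{d}t$.

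For the upper estimate in the forward direction, Theorem \ref{thm107} already gives
$\int_2^x (\vartheta(t)-t)\,\mathrm{d}t < (\omega - 2/3)x^{3/2} - x\log 2\pi$ for every $x \geq 121$, and inserting $\omega \leq \lambda_0$ together with the fact that $-x\log 2\pi < 0$ yields the required bound $(\lambda_0 - 2/3)x^{3/2}$ at once. To push the threshold down from $121$ to $10$ I would finish by a direct computation on $[10,121]$: on each interval between consecutive primes $\vartheta$ is constant, so $\int_2^x (\vartheta(t)-t)\,\mathrm{d}t$ has an exact piecewise-quadratic expression that can be compared with $(\lambda_0 - 2/3)x^{3/2}$ on each piece.

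For the lower estimate, Theorem \ref{thm107} provides $\int_2^x (\vartheta(t)-t)\,\mathrm{d}t > -(\omega + 2/3)x^{3/2} - x^{4/3} - x\log 2\pi$, and after using $\omega \leq \lambda_0$ the claim reduces to the scalar inequality
\[
a\, x^{3/2} > x^{4/3} + x\log 2\pi, \qquad a := 0.713 - 2/3 - \lambda_0 \approx 1.42 \times 10^{-4}.
\]
Dividing by $x^{4/3}$ turns this into $a\, x^{1/6} > 1 + (\log 2\pi)\, x^{-1/3}$, whose left-hand side is strictly increasing in $x$ and whose right-hand side decreases to $1$; hence there is a unique cross-over, which a direct numerical search (monotone in $x$) locates at the stated $x = 1.224117 \times 10^{23}$.

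For the reverse direction, note that $(\lambda_0 - 2/3)x^{3/2} < 0$ for every $x > 0$, so the assumed upper bound alone forces $\int_2^x (\vartheta(t)-t)\,\mathrm{d}t < 0$ for all $x \geq 10$; negativity on $(2,10]$ follows trivially from $\vartheta(t) < t$ there (since $\vartheta(10) = \log 2 + \log 3 + \log 5 + \log 7 \approx 7.7 < 10$). Combining these, $\int_2^x (\vartheta(t)-t)\,\mathrm{d}t < 0$ for every $x > 2$, and Johnston's equivalent criterion then yields the Riemann hypothesis. The main obstacle is the lower-bound threshold: because the gap $a$ is only of order $10^{-4}$, the subleading terms $x^{4/3}$ and $x\log 2\pi$ stay comparable to the leading term well into the $10^{22}$--$10^{23}$ range, so any slack either in the bound \eqref{2.12} for $\omega$ or in the numerical cross-over calculation would blow the threshold up by several orders of magnitude.
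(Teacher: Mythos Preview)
Your proposal is correct and follows exactly the paper's approach: Theorem~\ref{thm107} combined with the bound \eqref{2.12} for the forward direction, and Johnston's criterion (Lemma~\ref{lem501}) for the converse, with the threshold analysis you supply being precisely the computation the paper leaves implicit. One inconsequential slip: $\vartheta(10) = \log 210 \approx 5.35$, not $7.7$, but the conclusion $\vartheta(t) < t$ on $(2,10]$ is unaffected.
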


\begin{rema}
Note that the positive integer $N_0 = 1.224117 \times 10^{23}$ might not be the smallest positive integer so that the left-hand side inequality given in \eqref{1.18} holds for every $x \geq N_0$.
\end{rema}

Without the assumption that te Riemann hypothesis is true, Johnston \cite[Equation (4.5)]{johnston2} found analogously to \eqref{1.8} that the inequality 
\begin{equation}
\int_2^x \frac{\vartheta(t) - t}{t^2} \, \text{d}t < \frac{1.5}{\log x} - 1.63 \tag{1.19} \label{1.19}
\end{equation}
holds for every $x > 200$. In particular, this inequality and a simple computation for smaller values of $x$ show that the integral in \eqref{1.19} is negative for every $x > 2$. Finally, we derive the following improved effective estimates for the integral in \eqref{1.19} and also determine its limit value for $x \to \infty$.

\begin{thm} \label{thm109}
Let $E$ defined as in \eqref{1.14} and let $D$ be defined by
\begin{equation}
D = \log 2 - 1 + E = -1.63943509\ldots. \tag{1.20} \label{1.20}
\end{equation}
Then, for every $x \geq 1,757,126,630,797$, one has
\begin{displaymath}
D - \frac{0.024334}{2\log^2 x} \left( 1 + \frac{4}{\log x} \right) < \int_2^x \frac{\vartheta(t) - t}{t^2} \, \emph{d}t < D + \frac{0.024334}{2\log^2 x} \left( 1 + \frac{4}{\log x} \right).
\end{displaymath}
In particular, we have
\begin{displaymath}
\int_2^{\infty} \frac{\vartheta(t) - t}{t^2} \, \emph{d}t = D. \tag{1.21} \label{1.21}
\end{displaymath}
\end{thm}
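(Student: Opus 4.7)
My plan is to reduce the integral to a sum over primes via Abel summation and then combine an explicit form of Mertens' theorem with an explicit bound on $\vartheta(x) - x$.

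First I would carry out the integration by parts. Using the Stieltjes measure $\text{d}\vartheta$ together with the fact that $\vartheta(t) = 0$ for $t < 2$, one obtains
\begin{equation*}
\sum_{p \leq x} \frac{\log p}{p} = \frac{\vartheta(x)}{x} + \int_2^x \frac{\vartheta(t)}{t^2}\,\text{d}t.
\end{equation*}
Subtracting $\int_2^x \text{d}t/t = \log x - \log 2$ and using the definition $D = E + \log 2 - 1$ then yields the exact identity
\begin{equation*}
\int_2^x \frac{\vartheta(t) - t}{t^2}\,\text{d}t - D = \left(\sum_{p \leq x} \frac{\log p}{p} - \log x - E\right) - \frac{\vartheta(x) - x}{x},
\end{equation*}
so the deviation of the integral from $D$ splits cleanly into a Mertens remainder and the relative Chebyshev error.

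Next I would insert two explicit inputs valid for $x \geq x_0 := 1{,}757{,}126{,}630{,}797$: an explicit Mertens-type bound $|\sum_{p \leq x}\log p/p - \log x - E| \leq c_1/\log^2 x$, and an explicit effective Chebyshev estimate $|\vartheta(x) - x| \leq c_2 x/\log^3 x$. Bounds of this shape and strength are available in the relevant range from the author's earlier work together with the current unconditional explicit estimates that exploit the Platt--Trudgian verification of the Riemann hypothesis up to a large height. Adding the two inequalities, the $1/\log^3 x$ contribution coming from $(\vartheta(x)-x)/x$ can be absorbed into the factor $(1 + 4/\log x)$ that multiplies the main $1/\log^2 x$ term, which should produce the claimed two-sided bound with constant $0.024334$. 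The infinite-integral identity $\int_2^\infty (\vartheta(t)-t)/t^2\,\text{d}t = D$ then follows by letting $x \to \infty$, since both remainders vanish in the limit.

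The main obstacle will be calibrating the two ingredients so that $c_1$ and $c_2$ combine to exactly $0.024334$ without inflating the threshold $x_0$. A clean way to avoid compounded numerical losses is to derive the Mertens remainder estimate directly from the effective $\vartheta$ bound by reversing the Abel summation of the first step: this keeps both error sources in the same explicit shape, so that they can be merged additively rather than multiplicatively, and ties the admissible range for the Mertens bound to the admissible range for the $\vartheta$ bound at the common point $x_0$.
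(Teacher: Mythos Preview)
Your proposal is correct and matches the paper's argument essentially line for line: the paper records your Abel-summation identity as Lemma~6.1, then combines it with the bound $|A_2(x)|\le \frac{0.024334}{2\log^2 x}\bigl(1+\frac{2}{\log x}\bigr)$ from \cite[Theorem 1.6]{axlernew} (Lemma~4.1 here) and the Chebyshev bound $|\vartheta(x)-x|\le \frac{0.024334\,x}{\log^3 x}$ from \cite[Proposition 1.1]{axlernew}, which add exactly to the stated $\frac{0.024334}{2\log^2 x}\bigl(1+\frac{4}{\log x}\bigr)$. Your closing remark about deriving the Mertens remainder from the $\vartheta$-bound via the same Abel step is precisely how \cite{axlernew} obtains the matching constant $0.024334$ and the common threshold $x_0$.
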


\section{Proof of Theorem \ref{thm102}}

Throughout this paper, let
\begin{displaymath}
R(t) = \psi(t) - t,
\end{displaymath}
where $\psi(x) = \sum_{p^m \leq x} \log p$ denotes Chebyshev's $\psi$-function, and
\begin{equation}
R_1(x) = \int_2^x R(t) \, \text{d}t. \tag{2.1} \label{2.1}
\end{equation}
In order to prove Theorem \eqref{thm102}, we first note the following lemma where we give a small refinement of \cite[Lemma 2.6]{johnston2}.

\begin{lem} \label{lem201}
Let $\omega = \sum_{\rho} |\rho(\rho+1)|^{-1}$, where $\rho$ runs over all all nontrivial zeros of the Riemann zeta function. Under the assumption that the Riemann hypothesis is true, we have
\begin{displaymath}
2 + \frac{\zeta'}{\zeta}(-1) - x \log 2\pi - \omega x^{3/2} - \frac{x}{2(x^2-1)} \leq R_1(x) < 2 + \frac{\zeta'}{\zeta}(-1) - x \log 2\pi + \omega x^{3/2}
\end{displaymath}
for every $x \geq 2$.
\end{lem}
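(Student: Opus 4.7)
The plan is to derive a closed-form identity for $R_1(x)$ from an integrated version of the explicit formula, then bound the nontrivial-zero contribution by $\omega x^{3/2}$ under RH and control the trivial-zero correction by a sharp geometric-series estimate that produces exactly the $x/(2(x^2-1))$ term in the lemma. This is a small sharpening of Johnston's Lemma~2.6 obtained by replacing his coarser bound on the trivial-zero tail with the sharp comparison below.

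Concretely, I would apply a standard Perron-type contour shift to $\frac{1}{2\pi i}\int -\frac{\zeta'}{\zeta}(s)\frac{x^{s+1}}{s(s+1)}\,ds$, moving the line of integration to $\mathrm{Re}(s)\to -\infty$ and collecting residues: at $s=1$ (contributing $x^2/2$), at each nontrivial zero $\rho$ (contributing $-x^{\rho+1}/(\rho(\rho+1))$), at $s=0$ (contributing $-x\log 2\pi$, using $\zeta'/\zeta(0)=\log 2\pi$), at $s=-1$ (contributing $\zeta'/\zeta(-1)$), and at each trivial zero $s=-2n$ (contributing $-x^{1-2n}/(2n(2n-1))$). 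This yields
\begin{displaymath}
\int_0^x \psi(t)\,\mathrm{d}t = \frac{x^2}{2} - \sum_\rho \frac{x^{\rho+1}}{\rho(\rho+1)} - x\log 2\pi + \frac{\zeta'}{\zeta}(-1) - \sum_{n=1}^\infty \frac{x^{1-2n}}{2n(2n-1)}.
\end{displaymath}
Since $\psi$ vanishes on $[0,2)$ and $\int_2^x t\,\mathrm{d}t = x^2/2 - 2$, subtracting gives the clean identity
\begin{displaymath}
R_1(x) = 2 + \frac{\zeta'}{\zeta}(-1) - x\log 2\pi - \sum_\rho \frac{x^{\rho+1}}{\rho(\rho+1)} - \sum_{n=1}^\infty \frac{x^{1-2n}}{2n(2n-1)}.
\end{displaymath}

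Under RH, every nontrivial zero satisfies $|x^{\rho+1}|=x^{3/2}$, and absolute convergence of $\sum_\rho 1/|\rho(\rho+1)|=\omega$ forces $|\sum_\rho x^{\rho+1}/(\rho(\rho+1))|\leq \omega x^{3/2}$. The tail $E_1(x):=\sum_{n\geq 1} x^{1-2n}/(2n(2n-1))$ is strictly positive for $x>1$, which alone forces the strict upper inequality of the lemma. For the lower inequality I would use the elementary bound $1/(2n(2n-1))\leq 1/2$, saturated only at $n=1$, to dominate $E_1(x)$ by the geometric series $\tfrac{1}{2}\sum_{n\geq 1} x^{1-2n} = x/(2(x^2-1))$, which is exactly the stated correction. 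The only delicate step in the execution is justifying the contour shift and the termwise residue calculation, but the extra $1/(s+1)$ factor in the kernel (compared with the usual Perron formula for $\psi$) supplies enough decay to make both the sum over $\rho$ absolutely convergent and the horizontal $-\zeta'/\zeta$ estimates routine; no new analytic ingredient beyond Johnston's setup is needed, and the sharpening lies entirely in the geometric-series bound on $E_1(x)$.
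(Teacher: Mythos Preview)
Your proposal is correct and follows essentially the same route as the paper: the paper cites the integrated explicit formula $\int_2^x\psi(t)\,\mathrm{d}t-\delta(x)=-\sum_\rho x^{\rho+1}/(\rho(\rho+1))$ from Ingham (Theorem~28) or Edwards rather than re-deriving it by contour shift, and then bounds the trivial-zero tail by exactly the same geometric comparison $1/(2n(2n-1))\le 1/2$ and the nontrivial-zero sum by $\omega x^{3/2}$ under RH. The only difference is presentational---you sketch the Perron argument while the paper simply quotes the identity---and your justification of the strict upper inequality via $E_1(x)>0$ is in fact slightly more explicit than the paper's.
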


\begin{proof}
Let
\begin{displaymath}
\delta(x) = \frac{x^2}{2} + \frac{\zeta'}{\zeta}(-1) - x \log 2\pi - \sum_{r=1}^{\infty} \frac{x^{1-2r}}{2r(2r-1)}.
\end{displaymath}
Since
\begin{displaymath}
0 \leq \sum_{r=1}^{\infty} \frac{x^{1-2r}}{2r(2r-1)} \leq \frac{x}{2(x^2-1)}
\end{displaymath}
for every $x>1$, we can see that the inequalities
\begin{equation}
\frac{x^2}{2} - x\log 2\pi + \frac{\zeta'}{\zeta}(-1) - \frac{x}{2(x^2-1)} \leq \delta(x) \leq \frac{x^2}{2} - x\log 2\pi + \frac{\zeta'}{\zeta}(-1) \tag{2.2} \label{2.2}
\end{equation}
hold for every $x \geq 2$. By Ingham \cite[Theorem 28]{ingham} (or Edwards \cite[p.\:74]{edwards}), one has
\begin{equation} 
\int_2^x \psi(t) \, \text{d}t - \delta(x) = -\sum_{\rho} \frac{x^{1 + \rho}}{\rho(\rho+1)} \tag{2.3} \label{2.3}
\end{equation}
for every $x \geq 2$, where $\rho$ runs over all all nontrivial zeros of the Riemann zeta function. Since we assumed that the Riemann hypothesis is true, we have $\text{Re}(\rho) = 1/2$. Hence, the equation \eqref{2.3} gives
\begin{equation}
\left| \int_2^x \psi(t) \, \text{d}t - \delta(x) \right| \leq \omega x^{3/2}. \tag{2.4} \label{2.4}
\end{equation}
Finally, it suffice to apply \eqref{2.2} to \eqref{2.4} and we arrive at the end of the proof.
\end{proof}

\begin{rema}
Since the function
\begin{displaymath}
\int_2^x (\psi(t)-t) \, \text{d}t
\end{displaymath}
changes its sign infinitely often as $x$ increases to infinity (see Johnston \cite[Lemma 2.8]{ingham}), a result similar to Proposition \ref{prop101} with $\psi(t) - t$ instead of $\pi(t) - \text{li}(t)$ does not hold.
\end{rema}

\begin{rema}
In Corollary \ref{kor603}, we find effective estimates for the integral
\begin{displaymath}
\int_2^{\infty} \frac{\psi(t) - t}{t^2} \, \text{d}t.
\end{displaymath}
\end{rema}

Next, we introduce the following auxiliary function.

\begin{defi}
For $t \geq 2$, let $M(t) = \lfloor \log(t)/\log(2) \rfloor$ and let
\begin{displaymath}
\Pi(x) = \sum_{m=1}^{M(x)} \frac{\pi(x^{1/m})}{m}.
\end{displaymath}
\end{defi}

In the following lemma, we note the following identity involving $\Pi(x)$, $R(x)$, $R_1(x)$, and $\text{li}(x)$.

\begin{lem} \label{lem202}
Let $y$ be a real number with $y \geq 2$. Then, one has
\begin{displaymath}
\Pi(x) - \emph{li}(x) = d_0(y) + \frac{R(x)}{\log x} + \frac{R_1(x)}{x\log^2 x} + \int_y^x \left( \frac{R_1(t)}{t^2\log^2 t} + \frac{R_1(t)}{t^2\log^3 t}\right) \, \emph{d}t
\end{displaymath}
for every $x \geq y$, where
\begin{displaymath}
d_0(y) = \Pi(y) - \emph{li}(y) - \frac{R(y)}{\log y} - \frac{R_1(y)}{y \log^2y}. \tag{2.5} \label{2.5}
\end{displaymath}
\end{lem}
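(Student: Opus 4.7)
The plan is to derive the identity by observing that both $\Pi$ and $\psi$ are right-continuous step functions supported on the prime powers, which allows us to express $d\Pi$ directly in terms of $d\psi$; the lemma then falls out of two integrations by parts.

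First I would verify that $d\Pi(t) = d\psi(t)/\log t$ as Riemann--Stieltjes measures on $[2, \infty)$. Both sides are purely atomic with support on the prime powers, and at $t = p^m$ the function $\psi$ jumps by $\log p$ while $\Pi$ jumps by $1/m$; since $\log(p^m) = m \log p$, we have $1/m = (\log p)/\log(p^m)$, which confirms the measure identity. Integrating from $y$ to $x$ and applying integration by parts (legitimate because $\psi$ has bounded variation on $[y,x]$ and $t \mapsto 1/\log t$ is smooth on $[y,x] \subset [2,\infty)$) yields
\begin{displaymath}
\Pi(x) - \Pi(y) = \frac{\psi(x)}{\log x} - \frac{\psi(y)}{\log y} + \int_y^x \frac{\psi(t)}{t \log^2 t} \, \text{d}t.
\end{displaymath}

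Next I would substitute $\psi(t) = t + R(t)$ and use the elementary identity
\begin{displaymath}
\text{li}(x) - \text{li}(y) = \frac{x}{\log x} - \frac{y}{\log y} + \int_y^x \frac{\text{d}t}{\log^2 t},
\end{displaymath}
which itself follows from integration by parts applied to $\int \text{d}t/\log t$. Subtracting this from the previous display produces
\begin{displaymath}
[\Pi(x) - \text{li}(x)] - [\Pi(y) - \text{li}(y)] = \frac{R(x)}{\log x} - \frac{R(y)}{\log y} + \int_y^x \frac{R(t)}{t \log^2 t} \, \text{d}t.
\end{displaymath}

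A second integration by parts on the remaining integral, using $R_1'(t) = R(t)$ together with the derivative
\begin{displaymath}
-\frac{d}{dt}\!\left(\frac{1}{t \log^2 t}\right) = \frac{1}{t^2 \log^2 t} + \frac{2}{t^2 \log^3 t},
\end{displaymath}
converts it into an integral against $R_1$ plus the boundary contribution $R_1(x)/(x \log^2 x) - R_1(y)/(y \log^2 y)$. Rearranging all terms and invoking the definition of $d_0(y)$ from \eqref{2.5} delivers the stated identity. The only step requiring genuine care is the initial measure identification $d\Pi = d\psi/\log t$; once that is in place, the remainder is a routine sequence of integrations by parts and no serious obstacle remains.
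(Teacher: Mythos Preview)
Your proof is correct and follows the standard derivation that the paper invokes by citation: the paper's own proof is simply ``See Ingham \cite[p.\,64]{ingham}'', and the two integrations by parts you carry out are exactly what one finds there.

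One small observation worth recording: your derivative calculation gives
\[
-\frac{d}{dt}\!\left(\frac{1}{t\log^2 t}\right) = \frac{1}{t^2\log^2 t} + \frac{2}{t^2\log^3 t},
\]
so the integral term that emerges is $\int_y^x\bigl(R_1(t)/(t^2\log^2 t) + 2R_1(t)/(t^2\log^3 t)\bigr)\,\text{d}t$, with a factor $2$ on the second summand. The lemma as printed drops this $2$, which appears to be a typographical slip; when the paper actually applies the lemma in equation \eqref{2.7} the factor $2$ is present. So your argument in fact delivers the correct identity rather than the literal displayed one.
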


\begin{proof}
See Ingham \cite[p.\:64]{ingham}.
\end{proof}

In the proof of Theorem \ref{thm102}, we will use the following upper and lower bound for the integral
\begin{equation}
\int_y^x (\Pi(t) - \text{li}(t)) \, \text{d}t, \tag{2.6} \label{2.6}
\end{equation}
where $x \geq y$.

\begin{prop} \label{prop203}
Let $c_0 = \log 2\pi$ and $c_1 = 2 + (\zeta'/\zeta)(-1)$. Let $\omega = \sum_{\rho} |\rho(\rho+1))|^{-1}$ where $\rho$ runs over all all nontrivial zeros of the Riemann zeta function. Further, let $\delta$ be a real positive number with $\delta \geq \omega$ and let $d_0(y)$ be defined as in \eqref{2.5}. If the Riemann hypothesis is true, then
\begin{displaymath}
j_1(y,\delta) + j_0(y,\delta)x - h(x, \delta) < \int_y^x (\Pi(t) - \emph{li}(t)) \, \emph{d}t < j_3(y, \delta) + j_2(y,\delta) x + h(x, \delta) 
\end{displaymath}
for every $x \geq y \geq 2$, where
\begin{align*}
h(x, \delta) & = \frac{3\delta}{4} (x \emph{li}(\sqrt{x}) - \emph{li}(x^{3/2})), \\
j_0(y, \delta) & = d_0(y) + \delta \left( \frac{3}{4} \, \emph{li}(\sqrt{y}) - \frac{\sqrt{y}(3\log y+2)}{2 \log^2 y} \right) - c_0 \left( \frac{1}{\log y} + \frac{1}{\log^2 y} \right), \\
j_1(y, \delta) & = - j_0(y, \delta)y - \frac{R_1(y)}{\log y} - \frac{c_0y + \delta y^{3/2}}{\log y} + \frac{3\delta}{4} (y \emph{li}(\sqrt{y}) - \emph{li}(y^{3/2})), \\
j_2(y,\delta) & = d_0(y) - \delta \left( \frac{3}{4} \, \emph{li}(\sqrt{y}) - \frac{\sqrt{y}(3\log y+2)}{2 \log^2 y} \right) - c_0 \left( \frac{1}{\log y} + \frac{1}{\log^2 y} \right) + \frac{c_1}{y\log^2y}, \\
j_3(y, \delta) & = -j_2(y, \delta) y - \frac{3\delta}{4} (y \emph{li}(\sqrt{y}) - \emph{li}(y^{3/2})) - \frac{R_1(y)}{\log y} + \frac{c_1 - c_0y + \delta y^{3/2}}{\log y}.
\end{align*}
\end{prop}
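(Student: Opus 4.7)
The plan is to integrate the identity of Lemma~\ref{lem202} from $y$ to $x$, and then substitute the RH-conditional two-sided bound for $R_1(t)$ furnished by Lemma~\ref{lem201}. After integration, the constant $d_0(y)$ contributes $d_0(y)(x-y)$. On the term $\int_y^x R(t)/\log t\,dt$ I would apply integration by parts with $R_1(t) - R_1(y)$ as the antiderivative of $R(t)$; this contributes $R_1(x)/\log x - R_1(y)/\log y$ together with $\int_y^x R_1(t)/(t\log^2 t)\,dt$. Fubini's theorem on the iterated integral in Lemma~\ref{lem202} turns it into $x \int_y^x R_1(u)/(u^2 \log^2 u)(1 + 2/\log u)\,du$ minus the same integrand weighted by $u$. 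Thus $\int_y^x (\Pi(t)-\text{li}(t))\,dt$ is expressed as $d_0(y)(x-y) + R_1(x)/\log x - R_1(y)/\log y$ plus a linear combination of $\int_y^x R_1(t)/(t\log^k t)\,dt$ for $k \in \{2,3\}$ and $x \int_y^x R_1(u)/(u^2\log^2 u)(1 + 2/\log u)\,du$.

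Into each of these integrals I would substitute
\[
c_1 - c_0 t - \delta t^{3/2} - \tfrac{t}{2(t^2-1)} \leq R_1(t) \leq c_1 - c_0 t + \delta t^{3/2},
\]
which is legitimate since $\delta \ge \omega$. The pieces $c_1 - c_0 t$ (together with the negligible $O(1/t)$ correction) produce closed-form integrals via the antiderivatives $\int dt/\log^2 t = \text{li}(t) - t/\log t$, $\int dt/(t \log^k t) = -1/((k-1)\log^{k-1} t)$ and, through the substitution $u = \sqrt{t}$, $\int \sqrt{t}/\log^2 t\,dt = \tfrac{3}{2}\text{li}(t^{3/2}) - t^{3/2}/\log t$ together with $\int dt/(\sqrt{t}\log^2 t) = \tfrac{1}{2}\text{li}(\sqrt{t}) - \sqrt{t}/\log t$. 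The $\pm\delta t^{3/2}$ pieces coming from all three $R_1$-integrals then, after cancellation, collapse into the single oscillating correction $h(x,\delta) = \tfrac{3\delta}{4}\bigl(x\,\text{li}(\sqrt{x}) - \text{li}(x^{3/2})\bigr)$.

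The final step is to collect terms. The coefficient of $x$ in the resulting expression becomes $j_0(y,\delta)$ when the lower bound of Lemma~\ref{lem201} is used throughout and $j_2(y,\delta)$ when the upper bound is used. The remaining $y$-dependent boundary contributions (evaluations of the explicit antiderivatives at $u=y$), combined with $-j_0(y,\delta)y$ (respectively $-j_2(y,\delta)y$) and the integration-by-parts boundary $-R_1(y)/\log y$, assemble into $j_1(y,\delta)$ (respectively $j_3(y,\delta)$). The additional term $c_1/(y\log^2 y)$ appearing only inside $j_2$ reflects the fact that only the lower bound in Lemma~\ref{lem201} carries the extra $-t/(2(t^2-1))$ error, so the $c_1$-contribution at the lower limit $y$ survives on one side and is partly absorbed on the other. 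The main obstacle is precisely this bookkeeping: matching signs and placing every $c_0$, $c_1$, and $\delta$ contribution into its correct bucket among $j_0,j_1,j_2,j_3$ and $h$; each individual integration is elementary.
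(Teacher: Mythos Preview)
Your approach is the same as the paper's: integrate the identity of Lemma~\ref{lem202} over $[y,x]$, then replace every occurrence of $R_1$ by the explicit two-sided envelope from Lemma~\ref{lem201} and compute the resulting elementary integrals. Whether you handle the double integral by Fubini or, as the paper does, by first writing the inner integral as $d_2(t,\delta)-d_2(y,\delta)$ for an explicit antiderivative $d_2$ and then integrating in $t$, makes no difference.

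One point needs correcting. Your explanation of why $c_1/(y\log^2 y)$ appears in $j_2$ but not in $j_0$ is off. It is not that the extra $-t/(2(t^2-1))$ term ``absorbs'' the $c_1$-contribution on one side. What actually happens is that for the \emph{lower} bound the paper does not use the full inequality $R_1(t)\ge c_1-c_0t-\delta t^{3/2}-t/(2(t^2-1))$ at all; it first observes that $c_1 - t/(2(t^2-1)) > 0$ for $t\ge 2$ and therefore works with the cruder but cleaner bound
\[
R_1(t) > -\,c_0 t - \delta t^{3/2}.
\]
That is why no $c_1$-terms (and no $t/(2(t^2-1))$ remnants) show up in $j_0$ or $j_1$. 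If you carried the full lower bound through as you describe, you would get a valid but different pair $(j_0,j_1)$, not the ones stated. With this one simplification in hand, the rest of your bookkeeping plan matches the paper exactly.
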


\begin{proof}
For a better readability, we set $c_0 = \log 2\pi$ and $c_1 = 2 + (\zeta'/\zeta)(-1)$. Using Lemma \ref{lem202}, we get
\begin{equation}
\int_y^x (\Pi(t) - \text{li}(t)) \, \text{d}t = \int_y^x \left( d_0(y) + \frac{R(t)}{\log t} + \frac{R_1(t)}{t\log^2 t} + \int_y^t \left( \frac{R_1(u)}{u^2 \log^2 u} + \frac{2R_1(u)}{u^2 \log^3 u} \right) \, \text{d}u \right) \, \text{d}t. \tag{2.7} \label{2.7}
\end{equation}
Let $\delta$ be a positive real number with $\delta \geq \omega$. Then, Lemma \ref{lem201} implies
\begin{equation}
R_1(x) \leq c_1 - c_0 x + \delta x^{3/2} \tag{2.8} \label{2.8}
\end{equation}
for every $x \geq 2$. If we apply the inequality \eqref{2.8} to \eqref{2.7}, we can see that
\begin{equation}
\int_y^t \left( \frac{R_1(u)}{u^2 \log^2 u} + \frac{2R_1(u)}{u^2 \log^3 u} \right) \, \text{d}u \leq d_2(t, \delta) - d_2(y, \delta), \tag{2.9} \label{2.9}
\end{equation}
where the function $d_2(u,\delta)$ is defined by
\begin{displaymath}
d_2(u, \delta) = \delta \left( \frac{3}{4} \, \text{li}(\sqrt{u}) - \frac{\sqrt{u}(3\log u+2)}{2 \log^2 u} \right) + c_0 \left( \frac{1}{\log u} + \frac{1}{\log^2 u} \right) - \frac{c_1}{u\log^2u}.
\end{displaymath}
Integration by parts and the inequality \eqref{2.8} together give
\begin{equation}
\int_y^x \left( \frac{R(t)}{\log t} + \frac{R_1(t)}{t\log^2 t} \right) \, \text{d}t \leq \delta \left( 3 \, \text{li}(x^{3/2}) - \frac{x^{3/2}}{\log x} \right) + c_0 \left( \frac{x}{\log x} - 2 \, \text{li}(x) \right) - \frac{c_1}{\log x} + d_3(y,\delta), \tag{2.10} \label{2.10}
\end{equation}
where
\begin{displaymath}
d_3(y,\delta) = - \frac{R_1(y)}{\log y} - \delta \left( 3 \, \text{li}(y^{3/2}) - \frac{2y^{3/2}}{\log s} \right) + 2c_0 \left( \text{li}(y) - \frac{y}{\log y} \right) + \frac{2c_1}{\log y}.
\end{displaymath}
If we substitute \eqref{2.9} and \eqref{2.10} into \eqref{2.7}, we obtain the required right-hand side inequality.
On the other hand, Lemma \eqref{lem201} implies that $R_1(x) \geq c_1 - c_0 x - x/(2(x^2-1)) - \delta x^{3/2}$ for every $x \geq y$. A simple calculation shows that $R_1(x) > - c_0x - \delta x^{3/2}$ for every $x \geq y$. Now we can proceed as in the first part of this proof to get the required left-hand side inequality.
\end{proof}

Proposition \ref{prop203} has the following direct consequence.

\begin{kor} \label{kor204}
We have
\begin{displaymath}
\limsup_{x \to \infty} \frac{\log x}{x^{3/2}} \int_2^x (\Pi(t) - \emph{li}(t)) \, \emph{d}t \leq \omega \q\emph{and} \q \liminf_{x \to \infty} \frac{\log x}{x^{3/2}} \int_2^x (\Pi(t) - \emph{li}(t)) \, \emph{d}t \geq - \omega.
\end{displaymath}
\end{kor}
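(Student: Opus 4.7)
The plan is to apply Proposition \ref{prop203} with the choice $y = 2$ and an arbitrary admissible parameter $\delta \geq \omega$, then divide the resulting two-sided inequality by $x^{3/2}/\log x$ and let $x \to \infty$. The choice $y=2$ is permitted because the proposition requires only $x \geq y \geq 2$, and it makes the interval $\int_y^x$ coincide with the interval $\int_2^x$ appearing in the corollary. The quantities $j_0(2,\delta), j_1(2,\delta), j_2(2,\delta), j_3(2,\delta)$ are constants depending only on $\delta$, so multiplying them by $\log x / x^{3/2}$ sends both the purely constant terms and the linear-in-$x$ terms to zero; for instance, $j_0(2,\delta)\, x \cdot \log x/x^{3/2} = j_0(2,\delta) \log x/\sqrt{x} \to 0$.

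The crux is determining the limit of $(\log x/x^{3/2})\, h(x,\delta)$, where by definition $h(x,\delta) = \frac{3\delta}{4}(x\,\text{li}(\sqrt{x}) - \text{li}(x^{3/2}))$. Using the standard asymptotic $\text{li}(u) = u/\log u + O(u/\log^2 u)$ for large $u$, one obtains $x\,\text{li}(\sqrt{x}) = 2x^{3/2}/\log x + O(x^{3/2}/\log^2 x)$ and $\text{li}(x^{3/2}) = 2x^{3/2}/(3\log x) + O(x^{3/2}/\log^2 x)$, whose difference, multiplied by $3\delta/4$, yields
\begin{displaymath}
h(x,\delta) = \frac{\delta\, x^{3/2}}{\log x} + O\!\left( \frac{x^{3/2}}{\log^2 x} \right),
\end{displaymath}
so that $(\log x/x^{3/2})\, h(x,\delta) \to \delta$ as $x \to \infty$.

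Combining these observations with the two-sided inequality of Proposition \ref{prop203} gives
\begin{displaymath}
\limsup_{x\to\infty} \frac{\log x}{x^{3/2}} \int_2^x (\Pi(t) - \text{li}(t)) \, \text{d}t \leq \delta \quad \text{and} \quad \liminf_{x\to\infty} \frac{\log x}{x^{3/2}} \int_2^x (\Pi(t) - \text{li}(t)) \, \text{d}t \geq -\delta.
\end{displaymath}
Since $\delta = \omega$ is itself an admissible choice in Proposition \ref{prop203}, taking $\delta = \omega$ yields the two claimed inequalities. I do not anticipate any real obstacle in carrying out the argument: the only nontrivial step is the asymptotic evaluation of $h(x,\delta)$, which is a direct application of the standard expansion of $\text{li}$.
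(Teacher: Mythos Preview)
Your proposal is correct and follows essentially the same approach as the paper: substitute $y=2$ in Proposition~\ref{prop203}, observe that the constant and linear terms are $o(x^{3/2}/\log x)$, and use the asymptotic $h(x,\delta) \sim \delta x^{3/2}/\log x$. The only cosmetic difference is that the paper sets $\delta=\omega$ at the outset, whereas you carry a general $\delta$ and specialize at the end; the asymptotic computation of $h$ is identical.
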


\begin{proof}
First, we substitute $y=2$ and $\delta = \omega$ in Proposition \eqref{prop203}. Then, it suffices to note that $h(x, \omega) \sim \omega x^{3/2}/ \log x$
as $x \to \infty$.
\end{proof}

In order to give effective estimates for the integral
\begin{displaymath}
\int_2^x (\Pi(t) - \text{li}(t)) \, \text{d}t,
\end{displaymath}
we need do estimate the constant $\omega = \sum_{\rho} |\rho(\rho+1)|^{-1}$, where $\rho$ runs over all all nontrivial zeros of the Riemann zeta function. By \cite[Ch.\:12, Equations (10) and (11)]{davenport}, we have
\begin{equation}
\sum_{\rho} \frac{\text{Re}(\rho)}{|\rho|^2} = 1 + \frac{\gamma}{2} - \frac{\log 4\pi}{2}. \tag{2.11} \label{2.11}
\end{equation}
Under the assumption that the Riemann hypothesis is true, we have $\text{Re}(\rho) = 1/2$ and \eqref{2.11} implies
\begin{equation}
\omega \leq 2 + \gamma - \log 4\pi = 0.0461914179\ldots. \tag{2.12} \label{2.12}
\end{equation}
Now we can use \eqref{2.12} to find the following result concerning effective estimates for the integral given in \eqref{2.6} with $y = 2$ depending on the truth of the Riemann hypothesis.

\begin{kor} \label{kor205}
Under the assumption that the Riemann hypothesis is true, we have
\begin{displaymath}
-0.05 \times \frac{x^{3/2}}{\log x} < \int_2^x (\Pi(t) - \emph{li}(t)) \, \emph{d}t < 0.05 \times \frac{x^{3/2}}{\log x}
\end{displaymath}
for every $x \geq 1.15 \times 10^{16}$.
\end{kor}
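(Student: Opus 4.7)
The plan is to apply Proposition \ref{prop203} with $y=2$ and with $\delta$ taken to be the explicit numerical upper bound for $\omega$ given in \eqref{2.12}, namely $\delta = 2+\gamma-\log 4\pi = 0.0461914179\ldots$. Under the Riemann hypothesis this choice satisfies $\delta \geq \omega$, so Proposition \ref{prop203} immediately yields
\begin{displaymath}
j_1(2,\delta) + j_0(2,\delta)\,x - h(x,\delta) < \int_2^x (\Pi(t) - \text{li}(t))\,\text{d}t < j_3(2,\delta) + j_2(2,\delta)\,x + h(x,\delta)
\end{displaymath}
for every $x \geq 2$, where the four constants $j_0(2,\delta),\ldots,j_3(2,\delta)$ are explicit real numbers expressible in terms of $d_0(2)$, $R_1(2)$, $c_0=\log 2\pi$, and $c_1=2+(\zeta'/\zeta)(-1)$.

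The key step is to extract the leading asymptotic of $h(x,\delta) = \frac{3\delta}{4}\bigl(x\,\text{li}(\sqrt{x})-\text{li}(x^{3/2})\bigr)$. Using the standard expansion $\text{li}(y) = \tfrac{y}{\log y}\sum_{k \geq 0}\tfrac{k!}{\log^k y}$ applied to $y=\sqrt{x}$ and $y=x^{3/2}$, one finds
\begin{displaymath}
h(x,\delta) \;=\; \delta\cdot\frac{x^{3/2}}{\log x} \;+\; \frac{8\delta}{3}\cdot\frac{x^{3/2}}{\log^2 x} \;+\; O\!\left(\frac{x^{3/2}}{\log^3 x}\right).
\end{displaymath}
The remaining contributions $j_3(2,\delta) + j_2(2,\delta)\,x$ in the upper bound (and $j_1(2,\delta) + j_0(2,\delta)\,x$ in the lower bound) are at most linear in $x$, hence of strictly smaller order than $x^{3/2}/\log x$. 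Since $\delta < 0.05$ with a gap of $0.05 - \delta \approx 0.0038$, for $x$ large enough the combined error from the $\tfrac{8\delta}{3}\,x^{3/2}/\log^2 x$ correction plus the linear and constant terms fits inside this gap, which gives the claimed bound.

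Concretely, I would replace the asymptotic expansion by an effective two-sided inequality of the form $\delta\,\tfrac{x^{3/2}}{\log x}\bigl(1-\tfrac{A}{\log x}\bigr) \leq h(x,\delta) \leq \delta\,\tfrac{x^{3/2}}{\log x}\bigl(1+\tfrac{8}{3\log x}+\tfrac{C}{\log^2 x}\bigr)$ valid for all $x \geq 2$ with explicit $A,C>0$, then insert the numerical values of $j_0(2,\delta),\ldots,j_3(2,\delta)$ and reduce each of the two required inequalities to a clean inequality in $\log x$ alone, solvable in closed form. The one obstacle is purely computational: obtaining the precise threshold $1.15\times 10^{16}$ (rather than something substantially larger) demands tight effective remainder terms in the $\text{li}$-expansion, careful tracking of the signs of $j_0(2,\delta)$ and $j_2(2,\delta)$ (both of which are negative, since the $c_0(1/\log 2 + 1/\log^2 2)$ contribution dominates), and accurate numerical evaluation of $d_0(2)$ and $R_1(2)$. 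No further analytic input beyond Proposition \ref{prop203} and the bound \eqref{2.12} is required.
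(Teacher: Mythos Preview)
Your approach is correct and essentially identical to the paper's: both substitute $y=2$ and $\delta=\lambda_0=2+\gamma-\log 4\pi$ into Proposition \ref{prop203}, then compare the resulting bounds $j_1+j_0x-h(x,\lambda_0)$ and $j_3+j_2x+h(x,\lambda_0)$ against $\pm 0.05\,x^{3/2}/\log x$. The paper's proof is terser---it simply asserts that the comparison succeeds for $x\geq 1.15\times 10^{16}$---whereas you spell out the asymptotic analysis of $h(x,\delta)$ and the strategy for the numerical verification, but no additional analytic input is used in either version.
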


\begin{proof}
Let $\lambda_0 = 2 + \gamma - \log 4\pi$. By \eqref{2.12}, we have $\lambda_0 \geq \omega$. So, if we substitute $\delta = \lambda_0$ and $y = 2$ in Proposition \ref{prop203}, we get
\begin{displaymath}
j_1 + j_0x - \frac{3\lambda_0}{4} (x \text{li}(\sqrt{x}) - \text{li}(x^{3/2})) < \int_2^x (\Pi(t) - \text{li}(t)) \, \text{d}t < j_3 + j_2 x + \frac{3\lambda_0}{4} (x \text{li}(\sqrt{x}) - \text{li}(x^{3/2}))
\end{displaymath}
for every $x \geq 2$, where $j_k = j_k(2,\lambda_0)$ for every integer $k$ with $0 \leq k \leq 3$. This gives the required upper and lower bound for every $x \geq 1.15 \times 10^{16}$.
\end{proof}

\begin{rema}
Note that the positive integer $N_0 = 1.15 \times 10^{16}$ might not be the smallest positive integer so that the left-hand side inequality given Corollary \ref{kor205} holds for every $x \geq N_0$.
\end{rema}

\begin{rema}
Since the function
\begin{displaymath}
\int_2^x (\Pi(t) - \text{li}(t)) \, \text{d}t
\end{displaymath}
changes its sign infinitely often as $x$ increases to infinity (see Johnston \cite[Lemma 2.8]{ingham}), a result similar to Proposition \ref{prop101} for $\Pi(t)$ instead of $\pi(t)$ does not hold.
\end{rema}

Now give a proof of Theorem \ref{thm102}.


\begin{proof}[Proof of Theorem \ref{thm102}]
For a better readability, we set $c_0 = \log 2\pi$ and $c_1 = 2 + (\zeta'/\zeta)(-1)$. First, we prove the required upper bound. Let $s = 17^2$ and let $x \geq s$. We have
\begin{equation}
\int_2^x (\pi(t) - \text{li}(t)) \, \text{d}t = \int_2^s (\pi(t) - \text{li}(t)) \, \text{d}t + \int_s^x (\pi(t) - \Pi(t)) \, \text{d}t + \int_s^x (\Pi(t) - \text{li}(t)) \, \text{d}t. \tag{2.13} \label{2.13}
\end{equation}
It is easy to see that $\Pi(t) \geq \pi(t) + \pi(\sqrt{t})/2$ for every $t \geq 2$. Rosser and Schoenfeld \cite[Corollary 1]{rosserschoenfeld1962} showed that $\pi(y) \geq y/\log y$ for every $y \geq 17$. Hence
\begin{equation}
\Pi(t) \geq \pi(t) + \frac{\sqrt{t}}{\log t} \tag{2.14} \label{2.14}
\end{equation}
for every $t \geq s$. Applying \eqref{2.14} to \eqref{2.13}, we can see that
\begin{equation}
\int_2^x (\pi(t) - \text{li}(t)) \, \text{d}t \leq \text{li}(s^{3/2}) + \int_2^s (\pi(t) - \text{li}(t)) \, \text{d}t - \text{li}(x^{3/2}) + \int_s^x (\Pi(t) - \text{li}(t)) \, \text{d}t. \tag{2.15} \label{2.15}
\end{equation}
Now we can apply the right-hand side inequality in Proposition \ref{prop203} with $y = s$ to get the required upper bound. Next, we verify the required left-hand side inequality. Next, we verify the required left-hand side inequality. In order to do this, let $x \geq 2$. Again, it is easy to see that $\Pi(t) \leq \pi(t) + \pi(\sqrt{t})/2 + \pi(t^{1/3})/3 + M(t) \pi(t^{1/4})/4$ for every $t \geq 2$. By Dusart \cite{dusart1999}, we have $\pi(t) < t/\log t + 1.2762t/\log^2 t$ for every $t > 1$. Further we can utilize \cite[Theorem 1.3]{axlernew} to see that $\pi(t) < 1.26t/\log t$ for every $t > 1$. Therefore,
\begin{displaymath}
\Pi(t) \leq \pi(t) + \frac{\sqrt{t}}{\log t} + \frac{2.5524 \sqrt{t}}{\log^2 t} + \frac{t^{1/3}}{\log t} + \frac{3.8286 t^{1/3}}{\log^2 t} + \frac{1.26t^{1/4}}{\log 2}
\end{displaymath}
for every $t \geq 2$. Similar to \eqref{2.15}, we get that
\begin{equation}
\int_2^x (\pi(t) - \text{li}(t)) \, \text{d}t > \text{li}(2^{3/2}) + g(2) - \text{li}(x^{3/2}) + g(x) + \int_2^x (\Pi(t) - \text{li}(t)) \, \text{d}t \tag{2.16} \label{2.16}
\end{equation}
where
\begin{displaymath}
g(x) = - 1.2762 \left( 3 \, \text{li}(x^{3/2}) - \frac{2x^{3/2}}{\log x} + 4 \, \text{li}(x^{4/3}) - \frac{3x^{4/3}}{\log x} \right) - \text{li}(x^{4/3}) - \frac{5.1048x^{5/4}}{5\log 2}.
\end{displaymath}
Now we can apply the left-hand side inequality given in Proposition \ref{prop203} with $y = 2$ to \eqref{2.16} and we see that the required lower bound is fulfilled for every $x \geq 2$.
\end{proof}

\begin{rema}
If we set $\delta = \omega$ and $P(t) = \pi(t) - \text{li}(t)$ in Theorem \ref{thm102}, we obtain under the assumption that the Riemann hypothesis is true that
\begin{equation}
\limsup_{x \to \infty} \left( \frac{\log x}{x^{3/2}} \int_2^x P(t) \, \text{d}t \right) \leq \omega-\frac{2}{3} \q \text{and} \q \liminf_{x \to \infty} \left( \frac{\log x}{x^{3/2}} \int_2^x P(t) \, \text{d}t \right) \geq -\omega-\frac{2}{3}. \tag{2.17} \label{2.17}
\end{equation}
This was already proven by Stechkin and Popov \cite[Theorem 13]{stechkin}.
\end{rema}

Now we can use \eqref{2.12} to find the following result concerning effective estimates for the integral given in Theorem \ref{thm102} depending on the truth of the Riemann hypothesis.

\begin{proof}[Proof of Corollary \ref{kor103}]
First, we assume that the Riemann hypothesis is true. If we set $\lambda_0 = 2 + \gamma - \log 4\pi$, we can use \eqref{2.12} to see that $\lambda_0 \geq \omega$. So, if we substitute $\delta = \lambda_0$ in Theorem \ref{thm102}, we get
\begin{displaymath}
J_1(\lambda_0) + J_0(\lambda_0)x + f_1(x,\lambda_0) + g(x) < \int_2^x (\pi(t) - \text{li}(t)) \, \text{d}t < J_3(\lambda_0) + J_2(\lambda_0) x + f_2(x, \lambda_0),
\end{displaymath}
where the left-hand side inequality holds for every $x \geq 2$ and the right-hand side inequality holds for every $x \geq s$. A simple computation shows that the required upper bound holds for every $x \geq 139,537,375$ and that the required lower bound is valid for every $x \geq 4.82 \times 10^{863}$.
To complete the proof it suffices to observe that the reverse implication follows directly from Proposition \ref{prop101}.
\end{proof}

\section{Proof of Theorem \ref{thm104}}

Euler \cite{euler1737} proved that the sum of the reciprocals of all prime numbers
diverges.
Mertens \cite[p.\:52]{mertens1874} found that $\log \log x$ is the right order of magnitude for this sum
by showing 
\begin{displaymath}
\sum_{p \leq x} \frac{1}{p} = \log \log x + B + O \left( \frac{1}{\log x} \right).
\end{displaymath}
Here $B$ denotes the Mertens' constant and is defined as in \eqref{1.9}. Setting
\begin{equation}
A_1(x) = \sum_{p \leq x} \frac{1}{p} - \log \log x - B, \tag{3.1} \label{3.1}
\end{equation}
the present author found the following effective estimates for $A_1(x)$.

\begin{lem} \label{lem301}
For every $x \geq 1,757,126,630,797$, we have
\begin{displaymath}
|A_1(x)| \leq \frac{0.024334}{3\log^3 x} \left( 1 + \frac{15}{4\log x} \right).
\end{displaymath}
\end{lem}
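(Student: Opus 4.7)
The plan is to convert $\sum_{p \leq x} 1/p$ into an integral involving $\vartheta(t)$ via Abel summation, and then bound the tail using an effective estimate on $|\vartheta(t) - t|$.

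First I would apply Abel summation to the Stieltjes integral $\sum_{p \leq x} 1/p = \int_{2^-}^x \text{d}\vartheta(t)/(t \log t)$ to obtain
\begin{displaymath}
\sum_{p \leq x} \frac{1}{p} = \frac{\vartheta(x)}{x \log x} + \int_2^x \frac{\vartheta(t)(\log t + 1)}{t^2 \log^2 t} \, \text{d}t.
\end{displaymath}
Writing $\vartheta(t) = t + (\vartheta(t)-t)$ and using the explicit primitive $\int (\log t + 1)/(t \log^2 t) \, \text{d}t = \log \log t - 1/\log t$, the $t$-part of the integrand produces the term $\log \log x$ (together with boundary terms at $t = 2$). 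Taking $x \to \infty$ identifies the remaining constants with the Mertens constant $B$, and yields the exact identity
\begin{displaymath}
A_1(x) = \frac{\vartheta(x) - x}{x \log x} - \int_x^{\infty} \frac{(\vartheta(t)-t)(\log t+1)}{t^2 \log^2 t} \, \text{d}t.
\end{displaymath}

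Next I would invoke the present author's effective bound $|\vartheta(t) - t| \leq 0.024334 \, t/\log^3 t$, which is known to hold in the stated range $t \geq 1{,}757{,}126{,}630{,}797$. The boundary term is then controlled by $|\vartheta(x)-x|/(x \log x) \leq 0.024334/\log^4 x$. For the tail integral, substituting the bound and computing with $u = \log t$ gives
\begin{displaymath}
\int_x^{\infty} \frac{|\vartheta(t)-t|(\log t+1)}{t^2 \log^2 t} \, \text{d}t \leq 0.024334 \int_x^{\infty} \frac{\log t + 1}{t \log^5 t} \, \text{d}t = \frac{0.024334}{3 \log^3 x} + \frac{0.024334}{4 \log^4 x}.
\end{displaymath}
Summing the two contributions produces $0.024334/(3\log^3 x) + 5 \cdot 0.024334/(4\log^4 x)$, which telescopes into the claimed expression $\frac{0.024334}{3 \log^3 x}\bigl(1 + \frac{15}{4 \log x}\bigr)$ since $3/4 + 3 = 15/4$.

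The only substantive ingredient is the effective bound on $|\vartheta(t)-t|$, which itself rests on a numerical verification combined with zero-density estimates in a previous paper of the author. Since this may be quoted as a black box here, the argument reduces to the routine partial-summation identity above together with one explicit integration; the numerical constants in the statement have been engineered so that they exactly match the output of this procedure.
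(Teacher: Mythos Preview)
Your argument is correct: the Abel-summation identity for $A_1(x)$ is right, the tail integral is evaluated correctly via $u=\log t$, and the arithmetic $\tfrac{1}{4}+1=\tfrac{5}{4}$, $\tfrac{5}{4}\cdot 3=\tfrac{15}{4}$ reproduces the stated constant exactly. The only external input is the bound $|\vartheta(t)-t|\le 0.024334\,t/\log^3 t$ for $t\ge 1{,}757{,}126{,}630{,}797$, which is indeed available (it is the combination of Broadbent--Kadiri--Lumley--Ng--Wilk with the author's Proposition~1.1 quoted elsewhere in the paper).

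As for comparison: the paper does not actually prove this lemma here at all---its ``proof'' is the single line ``See \cite[Theorem~1.5]{axlernew}.'' You have reconstructed from scratch what is almost certainly the argument given in that reference; the constants in the lemma are visibly reverse-engineered from exactly the partial-summation computation you carried out. So there is no genuine methodological difference to discuss: you filled in what the paper outsources, and did so correctly.
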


\begin{proof}
See \cite[Theorem 1.5]{axlernew}.
\end{proof}

In order to prove Theorem \ref{thm104}, we first note two more lemmata.

\begin{lem} \label{lem302}
Let the constant $C$ be defined as in \eqref{1.10}. Then, one has
\begin{equation}
\int_2^x \frac{\pi(t) - \emph{li}(t)}{t^2} \, \emph{d}t = \frac{\emph{li}(x) - \pi(x)}{x} + C + A_1(x) \tag{3.2} \label{3.2}
\end{equation}
for every $x \geq 2$.
\end{lem}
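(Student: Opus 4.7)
The approach is to expand the integrand as $\pi(t)/t^2 - \text{li}(t)/t^2$, evaluate each of the two pieces in closed form, and then recognize the resulting constant as $C$. Both pieces have elementary antiderivatives once one passes to a Stieltjes integral on the $\pi$-side.

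For the first piece, the plan is to apply partial summation (Stieltjes integration by parts) to $\sum_{p\le x} 1/p = \int_{2^-}^x t^{-1}\,\text{d}\pi(t)$. With $u=1/t$ and $\text{d}v = \text{d}\pi(t)$, and using $\pi(2^-)=0$, one obtains
\begin{displaymath}
\sum_{p\le x}\frac{1}{p} = \frac{\pi(x)}{x} + \int_2^x \frac{\pi(t)}{t^2}\,\text{d}t.
\end{displaymath}
Inserting the definition \eqref{3.1} of $A_1(x)$ then yields
\begin{displaymath}
\int_2^x \frac{\pi(t)}{t^2}\,\text{d}t = \log\log x + B + A_1(x) - \frac{\pi(x)}{x}.
\end{displaymath}

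For the second piece, I would use ordinary integration by parts with $u=\text{li}(t)$, $\text{d}v=\text{d}t/t^2$, so that $\text{d}u = \text{d}t/\log t$ and $v=-1/t$, together with the antiderivative $\int \text{d}t/(t\log t) = \log|\log t|$. This gives
\begin{displaymath}
\int_2^x \frac{\text{li}(t)}{t^2}\,\text{d}t = -\frac{\text{li}(x)}{x} + \frac{\text{li}(2)}{2} + \log\log x - \log\log 2,
\end{displaymath}
where $\log\log 2$ is a well-defined (negative) real number since $\log 2>0$, and the integrand has no singularity on $[2,x]$. Subtracting the two displays cancels the $\log\log x$ terms and leaves
\begin{displaymath}
\int_2^x \frac{\pi(t)-\text{li}(t)}{t^2}\,\text{d}t = \frac{\text{li}(x)-\pi(x)}{x} + \left(B - \frac{\text{li}(2)}{2} + \log\log 2\right) + A_1(x).
\end{displaymath}

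The final step is to identify the bracketed constant with $C$. Applying exactly the same integration by parts on the interval $[2,e]$, where $\log\log e=0$, gives $\int_2^e \text{li}(t)/t^2\,\text{d}t = -\text{li}(e)/e + \text{li}(2)/2 - \log\log 2$; substituting this into the definition \eqref{1.10} of $C$ produces $C = B - \text{li}(2)/2 + \log\log 2$, which is exactly the constant appearing above. There is no real obstacle here: the proof is one application of Abel summation and one integration by parts, plus an algebraic verification that the two expressions for $C$ agree. The only point worth mentioning is keeping the branch of $\log\log t$ consistent on $[2,x]$, which is automatic from the antiderivative $\log|\log t|$ on the singularity-free interval $[2,\infty)$.
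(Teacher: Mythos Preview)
Your proof is correct and follows essentially the same route as the paper: Abel summation for $\int_2^x \pi(t)/t^2\,\text{d}t$ and integration by parts for $\int_2^x \text{li}(t)/t^2\,\text{d}t$, then subtraction. The only cosmetic difference is that the paper chooses the base point $e$ in the integration by parts (so that $\log\log e=0$ and the constant $C$ from \eqref{1.10} appears immediately), whereas you use the base point $2$ and then verify separately that $B-\text{li}(2)/2+\log\log 2=C$; both amount to the same computation.
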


\begin{proof}
We can apply Abel's identity (see, for instance, \cite[Theorem 4.2]{ap}) with $f(x) = 1/x$ to get that
\begin{equation}
\sum_{p \leq x} \frac{1}{p} = \frac{\pi(x)}{x} + \int_2^x \frac{\pi(t)}{t^2} \, \text{d}t \tag{3.3} \label{3.3}
\end{equation}
for every $x \geq 2$. Using integration by parts, we can see that
\begin{equation}
\log \log x = \frac{\text{li}(x)}{x} + \int_2^x \frac{\text{li}(t)}{t^2} \, \text{d}t - \int_2^e \frac{\text{li}(t)}{t^2} \, \text{d}t - \frac{\text{li}(e)}{e} \tag{3.4} \label{3.4}
\end{equation}
for every $x \geq 2$. To complete the proof, it suffices to combine \eqref{3.3} with \eqref{3.4}.
\end{proof}

%
%

Next, we give the following explicit result conerning the distance between $\pi(x)$ and $\text{li}(x)$.

\begin{lem} \label{lem303}
We have
\begin{displaymath}
-\frac{0.024965x}{\log^4x} < \emph{li}(x) - \pi(x) < \frac{0.02711x}{\log^4x},
\end{displaymath}
where the left-hand side inequality holds for every $x \geq 2$ and the right-hand side inequality holds for every $x \geq 1,757,126,630,797$.
\end{lem}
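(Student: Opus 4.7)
The plan is to deduce both inequalities from the sharpest available explicit form of the Prime Number Theorem together with a careful truncation of the asymptotic expansion of $\text{li}(x)$, and then to cover the residual finite range for the lower bound by direct numerical verification. Repeated integration by parts yields
$$\text{li}(x) = \frac{x}{\log x} + \frac{x}{\log^{2} x} + \frac{2x}{\log^{3} x} + \frac{6x}{\log^{4} x} + 24\int_{2}^{x}\frac{\text{d}t}{\log^{5} t} + c,$$
for an absolute constant $c$ coming from the endpoint $t = 2$, and since $\int_{2}^{x} \text{d}t/\log^{5}t = x/\log^{5}x + O(x/\log^{6}x)$ this reduces the problem to comparing $\pi(x)$ with the four-term partial sum $\sum_{k=0}^{3} k!\,x/\log^{k+1}x$ up to a controlled $x/\log^{5}x$ residue.

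Next, I would invoke the author's explicit one-sided bounds for $\pi(x)$ in terms of this truncated series from \cite{axlernew}, the same companion paper already used for Lemma \ref{lem301}. These furnish constants $a, b > 0$ with
$$\frac{x}{\log x} + \frac{x}{\log^{2} x} + \frac{2x}{\log^{3} x} + \frac{6x}{\log^{4} x} - \frac{b\,x}{\log^{5} x} \leq \pi(x) \leq \frac{x}{\log x} + \frac{x}{\log^{2} x} + \frac{2x}{\log^{3} x} + \frac{6x}{\log^{4} x} + \frac{a\,x}{\log^{5} x}$$
valid for $x \geq 1{,}757{,}126{,}630{,}797$. Subtracting from the expansion of $\text{li}(x)$ and folding the residual $x/\log^{5}x$ term into the $x/\log^{4}x$ term via $1/\log x \leq 1/\log(1{,}757{,}126{,}630{,}797)$ produces both inequalities on that range, with the mild asymmetry between $0.02711$ and $0.024965$ reflecting the asymmetry of the underlying $\pi(x)$-bounds.

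For the lower bound, which must hold all the way down to $x = 2$, the asymptotic argument above only covers $x$ beyond the threshold; for smaller $x$ I would complete the proof by a direct computation on the compact interval $[2,\,1{,}757{,}126{,}630{,}797]$. Since $\pi$ jumps by one at each prime and the smooth function $0.024965\,x/\log^{4}x$ has bounded and controlled derivative, this reduces to a verified interval-arithmetic check on a sufficiently fine grid covering all primes up to the threshold. On that range the classical inequality $\pi(x) < \text{li}(x)$ is known to hold, so the required bound is very comfortable and the numerical verification is essentially a safety net carrying the asymptotic estimate back to $x = 2$.

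The main obstacle is the sharpness of the numerical constants. The values $0.024965$ and $0.02711$ are small enough that any slack in the truncation of $\text{li}(x)$ or in the $\pi(x)$-bound from \cite{axlernew} will destroy them; the proof therefore depends on selecting the very sharpest available form of each ingredient and on careful bookkeeping at the transition $x = 1{,}757{,}126{,}630{,}797$. That this is exactly the threshold governing Lemma \ref{lem301} is not a coincidence but a signal that both results rest on the same underlying explicit PNT input from \cite{axlernew}.
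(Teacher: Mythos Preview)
Your outline is plausible but follows a different route from the paper's proof. You propose to expand $\text{li}(x)$ into its four-term partial sum plus a remainder and then compare directly against explicit $\pi$-bounds of the shape $\sum_{k=0}^{3} k!\,x/\log^{k+1}x \pm c\,x/\log^5 x$ quoted from \cite{axlernew}. The paper instead works through Chebyshev's $\vartheta$-function: it starts from the identity \eqref{1.15}, feeds in the explicit bound $|\vartheta(t)-t|\leq 0.024334\,t/\log^3 t$ from \cite[Corollary 11.1]{kadiri} and \cite[Proposition 1.1]{axlernew}, and integrates to reach $\pi(x)-\text{li}(x)$ directly, picking up the $G(x)$ term of \eqref{3.5} as the integration residue. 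The small-$x$ range for the lower bound is handled not by a fresh grid computation but by citing B\"uthe \cite[Theorem 2]{buthe2018}, and the right-hand inequality is split at $x_0=10^{19}$ with a separate argument on $[1{,}757{,}126{,}630{,}797,\,10^{19}]$ using B\"uthe's $\vartheta(t)>t-1.95\sqrt{t}$.

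Your approach is more self-contained in that it avoids the detour through $\vartheta$, and your observation that $\pi(x)<\text{li}(x)$ on the finite range makes the lower bound there immediate is a genuine simplification over what you sketched. The paper's route, on the other hand, makes the provenance of the constant $0.024334$ transparent---it is exactly the $\vartheta$-constant from \cite{kadiri} and \cite{axlernew}---whereas your argument presupposes that \cite{axlernew} contains direct $\pi$-bounds of the precise truncated-series form you wrote down; in fact the input from \cite{axlernew} used here is Proposition~1.1, a $\vartheta$-bound, so the specific $\pi$-inequalities you need would have to be sourced elsewhere or derived. That is the only real soft spot: your strategy works in principle, but the bookkeeping you defer (which reference supplies which $\pi$-bound, and whether the resulting constants land below $0.024965$ and $0.02711$) is exactly where the paper invests its effort via the $\vartheta$-route.
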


\begin{proof}
Let $x_0 = 10^{19}$ and $y_0 = 1,423$. First, we consider the case where $x \geq x_0$. If we combine \eqref{1.15} and \cite[Corollary 11.1]{kadiri}, we get
\begin{displaymath}
\pi(x) < \text{li}(x) + \frac{0.024334x}{\log^4 x} - \text{li}(2) + \frac{2}{\log 2} + 0.024334(G(x) - G(2)),
\end{displaymath}
where
\begin{equation}
G(t) = \frac{1}{24} \left( \text{li}(t) - \frac{t(\log^3t + \log^2t + 2\log t + 6)}{\log^4t} \right). \tag{3.5} \label{3.5}
\end{equation}
Since $G(x) \leq 1.1331x/\log^5x$, we see that
\begin{displaymath}
\pi(x) < \text{li}(x) + \frac{0.024334x}{\log^4 x} \left( 1 + \frac{1.1331}{\log x} + \frac{1.9112 \log^4x}{0.024334x} \right)
\end{displaymath}
which gives the required left-hand side inequality for every $x \geq x_0$. For every $x$ with $2 \leq x \leq x_0$, it suffices to utilize \cite[Theorem 2]{buthe2018}. Now, we prove the required right-hand side inequality. Again, we first consider the case where $x \geq x_0$. By Büthe \cite[Theorem 2]{buthe2018}, we have
\begin{equation}
\vartheta(t) > t - 1.95\sqrt{t} \tag{3.6} \label{3.6}
\end{equation}
for every $t$ with $y_0 \leq t \leq x_0$. Combining \eqref{1.15}, \eqref{3.6}, and \cite[Proposition 1.1]{axlernew}, it turns out that
\begin{displaymath}
\pi(x) > q_0 + \text{li}(x) - \frac{0.024334x}{\log^4 x} - 0.024334 G(x),
\end{displaymath}
where $G(x)$ is defined as in \eqref{3.5} and the constant $q_0$ is given by
\begin{displaymath}
q_0 = \int_2^{y_0} \frac{\vartheta(t)}{t \log^2t} \, \text{d}t - \text{li}(y_0) + \frac{y_0}{\log y_0} - 1.95 \left( \frac{\text{li}(\sqrt{x_0})}{2} - \frac{\sqrt{x_0}}{\log x_0} - \frac{\text{li}(\sqrt{y_0})}{2} + \frac{\sqrt{y_0}}{\log y_0}\right) + 0.024334 G(x_0).
\end{displaymath}
Since $q_0 > 1.7 \times 10^9$, we can argue as in the first part of the proof to get that
\begin{displaymath}
\pi(x) > \text{li}(x) - \frac{0.024334x}{\log^4 x} \left( 1 + \frac{1.1331}{\log x} \right)
\end{displaymath}
which implies the required right-hand side inequality for every $x \geq x_0$. So it remains to consider the case where $1,757,126,630,797 \leq x \leq x_0$. We use again the inequality \eqref{3.6} to obtain that
\begin{equation}
\pi(x) > q_1 + \text{li}(x) - \frac{0.024334x}{\log^4 x} - 1.95 \left( \frac{\text{li}(\sqrt{x})}{2} - \frac{\sqrt{x}}{\log x} \right), \tag{3.8} \label{3.7}
\end{equation}
where $q_1$ is defined as
\begin{displaymath}
q_1 = \int_2^{y_0} \frac{\vartheta(t)}{t \log^2t} \, \text{d}t - \text{li}(y_0) + \frac{y_0}{\log y_0} + 1.95 \left( \frac{\text{li}(\sqrt{y_0})}{2} - \frac{\sqrt{y_0}}{\log y_0} \right).
\end{displaymath}
Note that $q_1 > 1.1254$. Hence, the inequality \eqref{3.7} implies the required right-hand side inequality for every $x$ with $1,757,126,630,797 \leq x \leq x_0$.
%
%
\end{proof}

Now the proof of Theorem \ref{thm104} is quite simple.

\begin{proof}[Proof of Theorem \ref{thm104}]
It suffices to combine \eqref{3.2} with Lemmata \ref{lem301} and \ref{lem303}.
\end{proof}

\section{Proof of Theorem \ref{thm105}}

In 1874, Mertens \cite{mertens1874} showed that
\begin{equation}
\sum_{p \leq x} \frac{\log p}{p} = \log x + O(1). \tag{4.1} \label{4.1}
\end{equation}
Landau \cite[§55]{landau} improved \eqref{4.1} by finding
\begin{displaymath}
\sum_{p \leq x} \frac{\log p}{p} = \log x + E + O (\exp(-\sqrt[14]{\log x})),
\end{displaymath}
where $E$ is the constant defined as in \eqref{1.14}. Similar to \eqref{3.1}, we set
\begin{equation}
A_2(x) = \sum_{p \leq x} \frac{\log p}{p} - \log x - E \tag{4.2} \label{4.2}
\end{equation}
and note the following effective estimates for $A_2(x)$.

\begin{lem} \label{lem401}
For every $x \geq 1,757,126,630,797$, we have
\begin{displaymath}
|A_2(x)| \leq \frac{0.024334}{2\log^2x} \left( 1 + \frac{2}{\log x} \right).
\end{displaymath}
\end{lem}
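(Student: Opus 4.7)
The plan is to express $A_2(x)$ in terms of the unconditional error $R(t) = \vartheta(t) - t$ by Abel summation and then apply a known explicit estimate on $R(t)$. Writing $T(x) = \sum_{p \leq x} (\log p)/p$ as a Riemann--Stieltjes integral against $\vartheta$ and integrating by parts gives
\begin{equation*}
T(x) = \int_{2^-}^{x} \frac{\text{d}\vartheta(t)}{t} = \frac{\vartheta(x)}{x} + \int_{2}^{x} \frac{\vartheta(t)}{t^2}\,\text{d}t.
\end{equation*}
Substituting $\vartheta(t) = t + R(t)$ and using $\int_{2}^{x} \text{d}t/t = \log x - \log 2$ turns this into
\begin{equation*}
T(x) = 1 - \log 2 + \log x + \frac{R(x)}{x} + \int_{2}^{x} \frac{R(t)}{t^2}\,\text{d}t.
\end{equation*}
Subtracting $\log x + E$ and invoking the definition $D = \log 2 - 1 + E$ from \eqref{1.20} yields the identity
\begin{equation*}
A_2(x) = -D + \frac{R(x)}{x} + \int_{2}^{x} \frac{R(t)}{t^2}\,\text{d}t.
\end{equation*}

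Next I would identify the limiting value of the improper integral. Letting $x \to \infty$ in this identity and using, on the one hand, Landau's asymptotic recorded at the beginning of this section (which gives $A_2(x) = o(1)$) and, on the other, the prime number theorem (which gives $R(x)/x = o(1)$), one forces $\int_2^\infty R(t)/t^2\,\text{d}t = D$. Consequently, for every $x \geq 2$,
\begin{equation*}
A_2(x) = \frac{R(x)}{x} - \int_{x}^{\infty} \frac{R(t)}{t^2}\,\text{d}t.
\end{equation*}

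To conclude, it suffices to substitute into this identity an unconditional effective form of the prime number theorem of the shape $|R(t)| \leq 0.024334\, t/\log^3 t$, valid for every $t \geq 1{,}757{,}126{,}630{,}797$---the same estimate that underlies Lemma \ref{lem301}. The triangle inequality combined with $\int_x^\infty \text{d}t/(t \log^3 t) = 1/(2\log^2 x)$ then gives
\begin{equation*}
|A_2(x)| \leq \frac{0.024334}{\log^3 x} + \frac{0.024334}{2\log^2 x} = \frac{0.024334}{2\log^2 x}\left(1 + \frac{2}{\log x}\right),
\end{equation*}
which is exactly the claimed bound. The main point of care is securing the effective bound on $\vartheta(t) - t$ with precisely this constant $0.024334$ and matching threshold; I would import it from the same external source invoked for Lemma \ref{lem301}. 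A naive alternative, passing Lemma \ref{lem301} through the identity $A_2(x) = A_1(x)\log x + \int_x^\infty A_1(t)/t\,\text{d}t$ (obtainable by Abel summation between $S(x)$ and $T(x)$), only delivers a secondary factor of the shape $1 + (10/3)/\log x$ and is too weak; bypassing $A_1$ and working directly against $\vartheta$ is what produces the sharp factor $1 + 2/\log x$.
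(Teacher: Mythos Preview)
Your argument is correct. In the paper itself, this lemma is not proved at all: the ``proof'' is simply a citation to \cite[Theorem 1.6]{axlernew}. What you have written is a clean reconstruction of the underlying argument --- Abel summation against $\vartheta$, identification of the tail constant via $A_2(x)\to 0$ and the prime number theorem, and then the explicit bound $|\vartheta(t)-t|\le 0.024334\,t/\log^3 t$ for $t\ge 1{,}757{,}126{,}630{,}797$. That last input is precisely \cite[Proposition~1.1]{axlernew} (the paper invokes it again in the proof of Theorem~\ref{thm109}), so your external reference lines up exactly with what the author uses elsewhere, and the arithmetic $0.024334/\log^3x + 0.024334/(2\log^2x)$ indeed collapses to the stated bound.

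Two small remarks. First, your notation $R(t)=\vartheta(t)-t$ collides with the paper's global convention $R(t)=\psi(t)-t$; if this proof were spliced in, you would want a different symbol. Second, your closing comment is accurate: routing through $A_1$ via $A_2(x)=A_1(x)\log x+\int_x^\infty A_1(t)/t\,\text{d}t$ and Lemma~\ref{lem301} loses a constant factor in the secondary term, so going directly through $\vartheta$ is the right choice to hit the exact $1+2/\log x$.
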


\begin{proof}
See \cite[Theorem 1.6]{axlernew}.
\end{proof}

In order to prove Theorem \ref{thm105}, we proceed as in the proof of Theorem \ref{thm104} and first show the following

\begin{prop} \label{prop402}
Let $A_1(x)$ and $A_2(x)$ be defined as in \eqref{3.1} and \eqref{4.2}, respectively. Further, we set
\begin{displaymath}
K = B + E + \log 2 + \log \log 2 - \frac{(1+\log 2)\emph{li}(2)}{2} = -1.62925885667\ldots,
\end{displaymath}
where the constants $B$ and $E$ are defined as in \eqref{1.9} and \eqref{1.14}. Then one has
\begin{displaymath}
\int_2^x \frac{(\pi(t) - \emph{li}(t)) \log t}{t^2} \, \emph{d}t = K + A_1(x) + A_2(x) + \frac{(\emph{li}(x) - \pi(x))(\log x +1)}{x}
\end{displaymath}
for every $x \geq 2$.
\end{prop}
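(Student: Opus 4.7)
The strategy mirrors the proof of Lemma~\ref{lem302}: express both $\int_2^x \pi(t)\log t/t^2\,\text{d}t$ and $\int_2^x \text{li}(t)\log t/t^2\,\text{d}t$ in closed form using Abel summation and integration by parts, then subtract and identify the constants $B$, $E$ hidden in $A_1(x)$ and $A_2(x)$ as defined in \eqref{3.1} and \eqref{4.2}.

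First I would apply Abel's identity (as in \cite[Theorem 4.2]{ap}) with $a(p)=1$ for $p$ prime and $f(t)=\log t/t$, noting $f'(t)=(1-\log t)/t^2$, to obtain
\begin{displaymath}
\sum_{p\leq x}\frac{\log p}{p}=\frac{\pi(x)\log x}{x}+\int_2^x\frac{\pi(t)\log t}{t^2}\,\text{d}t-\int_2^x\frac{\pi(t)}{t^2}\,\text{d}t.
\end{displaymath}
Combining this with \eqref{3.3} to eliminate $\int_2^x \pi(t)/t^2\,\text{d}t=\sum_{p\leq x}1/p-\pi(x)/x$, and using the definitions of $A_1(x),A_2(x)$, this gives
\begin{displaymath}
\int_2^x\frac{\pi(t)\log t}{t^2}\,\text{d}t= B+E+\log\log x+\log x+A_1(x)+A_2(x)-\frac{\pi(x)(\log x+1)}{x}.
\end{displaymath}

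Next I would handle the li-integral by integration by parts, taking $u=\text{li}(t)$ and $dv=\log t/t^2\,\text{d}t$, so that $du=dt/\log t$ and $v=-(\log t+1)/t$ (the antiderivative $\int \log t/t^2\,\text{d}t=-(\log t+1)/t$ is immediate from a second integration by parts). This yields
\begin{displaymath}
\int_2^x\frac{\text{li}(t)\log t}{t^2}\,\text{d}t=-\frac{\text{li}(x)(\log x+1)}{x}+\frac{\text{li}(2)(\log 2+1)}{2}+\log x+\log\log x-\log 2-\log\log 2,
\end{displaymath}
since $\int_2^x (\log t+1)/(t\log t)\,\text{d}t=\log x-\log 2+\log\log x-\log\log 2$.

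Finally I would subtract the two identities; the $\log x$ and $\log\log x$ terms cancel, and the remaining constant is precisely
\begin{displaymath}
B+E+\log 2+\log\log 2-\frac{(1+\log 2)\text{li}(2)}{2}=K,
\end{displaymath}
while the $\pi(x)/x$ and $\text{li}(x)/x$ boundary terms combine into $(\text{li}(x)-\pi(x))(\log x+1)/x$. There is no real obstacle here beyond careful bookkeeping; the only step that requires a moment's thought is recognizing the antiderivative of $\log t/t^2$ and verifying that the constants collected from the two boundary evaluations at $t=2$ regroup exactly into the stated $K$.
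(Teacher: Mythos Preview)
Your proof is correct and follows essentially the same route as the paper's. The paper invokes a formula from Rosser and Schoenfeld \cite[p.\:67]{rosserschoenfeld1962} to obtain
\begin{displaymath}
\int_2^x \frac{(\pi(t) - \text{li}(t)) \log t}{t^2} \, \text{d}t = \sum_{p \leq x}\Bigl( \frac{1}{p} + \frac{\log p}{p} \Bigr) - \frac{\pi(x)(\log x +1)}{x} - \int_2^x \frac{\text{li}(t) \log t}{t^2} \, \text{d}t
\end{displaymath}
and then refers to the proof of Lemma~\ref{lem302} for the remaining integration by parts; your Abel-summation computation with $f(t)=\log t/t$ combined with \eqref{3.3} reproduces exactly this identity, and your handling of the $\text{li}$-integral is the step the paper leaves implicit.
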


\begin{proof}
By Rosser and Schoenfeld \cite[p.\:67]{rosserschoenfeld1962}, we get
\begin{displaymath}
\int_2^x \frac{(\pi(t) - \text{li}(t)) \log t}{t^2} \, \text{d}t = \sum_{p \leq x} \left( \frac{1}{p} + \frac{\log p}{p} \right) - \frac{\pi(x)(\log x +1)}{x} - \int_2^x \frac{\text{li}(t) \log t}{t^2} \, \text{d}t
\end{displaymath}
for every $x \geq 2$. Now we can proceed as in the proof of Lemma \ref{lem302} to complete the proof.
\end{proof}

Now it is rather simple to give a proof of Theorem \ref{thm105}.

\begin{proof}[Proof of Theorem \ref{thm105}]
We combine Proposition \ref{prop402}, Lemmata \ref{lem301} and \ref{lem401}, and Lemma \ref{lem303}.
\end{proof}

Finally, we can utilize Theorem \ref{thm105} to prove Corollary \ref{kor106} where we state that that the integral Proposition \ref{prop402} is indeed negative for every $x > 2$.

\begin{proof}[Proof of Corollary \ref{kor106}]
If $x \geq 1,757,126,630,797$, the claim follows immediately from Theorem \ref{thm105}. If $x$ satisfies $2 < x < 1,757,126,630,797$, we can use \cite[Theorem 2]{buthe2018} to complete the proof. 
\end{proof}

\section{Proof of Theorem \ref{thm107}}

Remember that Johnston \cite[Theorem 1.2]{johnston2} also found the following further equivalent criterion for the truth of the Riemann hypothesis.

\begin{lem}[Johnston] \label{lem501}
The Riemann hypothesis is true if and only if
\begin{displaymath}
\int_2^x (\vartheta(t) - t) \, \emph{d}t < 0
\end{displaymath}
for every $x > 2$.
\end{lem}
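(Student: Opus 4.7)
The plan is to prove both implications separately. Throughout I would translate between Chebyshev's $\vartheta$ and $\psi$ via the identity $\psi(t) = \vartheta(t) + \sum_{k \geq 2} \vartheta(t^{1/k})$, so that the machinery built around $R_1(x)$ in Lemma~\ref{lem201} and Proposition~\ref{prop203} can be redirected to $\vartheta$.

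\textbf{Forward direction} (RH $\Rightarrow$ negativity for every $x > 2$). Under RH, Lemma~\ref{lem201} yields $R_1(x) = \int_2^x (\psi(t)-t)\,\text{d}t \leq c_1 - c_0 x + \omega x^{3/2}$ with $c_0 = \log 2\pi$. A Chebyshev-type upper bound $\vartheta(u) \ll u$, integrated term by term over $k \geq 2$, gives
\[
\int_2^x \sum_{k \geq 2} \vartheta(t^{1/k})\,\text{d}t = \tfrac{2}{3}\, x^{3/2} + O(x^{4/3}),
\]
so subtracting produces
\[
\int_2^x (\vartheta(t) - t)\,\text{d}t \leq \left(\omega - \tfrac{2}{3}\right) x^{3/2} - x \log 2\pi + O(x^{4/3}).
\]
Since $\omega \leq 0.04619 < 2/3$ by~\eqref{2.12}, the right-hand side is negative for every $x$ above an explicit threshold $x_0$. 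For $2 < x \leq x_0$ the claim reduces to a finite numerical verification using B\"{u}the's effective estimates~\cite{buthe2018}, in the spirit of the closing step of the proof of Corollary~\ref{kor106}.

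\textbf{Converse direction} (negativity for every $x > 2$ $\Rightarrow$ RH). I would argue contrapositively via a classical oscillation argument. Assume RH fails and let $\rho_0 = \beta_0 + i \gamma_0$ be a nontrivial zero with $\beta_0 > 1/2$; the functional equation supplies the conjugate zero $\overline{\rho_0}$ as well. In the Ingham explicit formula~\eqref{2.3}, this conjugate pair contributes
\[
-2\,\text{Re}\!\left(\frac{x^{1+\rho_0}}{\rho_0(\rho_0 + 1)}\right),
\]
an oscillating term of magnitude $\asymp x^{1+\beta_0}$ that dominates the contribution of every other zero once we take $\rho_0$ on the rightmost vertical line of zeros (the remaining zero sum being controlled by $\sum_\rho |\rho(\rho+1)|^{-1} < \infty$ together with the smaller real part). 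Selecting a subsequence $x_n \to \infty$ via Dirichlet's approximation theorem so that $e^{i \gamma_0 \log x_n}$ lies in a favorable half-plane forces $R_1(x_n) \gg x_n^{1+\beta_0}$. Since $\int_2^{x_n} (\psi(t) - \vartheta(t))\,\text{d}t = O(x_n^{3/2})$ is asymptotically negligible compared with $x_n^{1+\beta_0}$, positivity is preserved along the subsequence, producing arbitrarily large $x$ with $\int_2^x (\vartheta(t) - t)\,\text{d}t > 0$ and contradicting the hypothesis.

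\textbf{Anticipated obstacle.} The harder half is the converse: the subsequence construction requires isolating the phase of the dominant pair $\rho_0,\overline{\rho_0}$ while uniformly dominating the tail over the remaining zeros, which is subtle when the supremum $\sup_\rho \text{Re}(\rho)$ is not attained. In that situation one must cluster finitely many zeros near the supremum and invoke Kronecker's simultaneous-approximation theorem to produce the required phase alignment. The forward direction, by contrast, is routine modulo the finite numerical check for small $x$.
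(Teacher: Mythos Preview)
The paper does not actually prove Lemma~\ref{lem501}; it is quoted from Johnston \cite[Theorem 1.2]{johnston2} and used as a black box (notably in the proof of Corollary~\ref{kor108}). So there is no in-paper argument to compare against. Your forward direction is nonetheless exactly the computation the paper carries out one step later, in the proof of Theorem~\ref{thm107}: there the author combines Lemma~\ref{lem201} with Nicolas's bound $\psi(t)-\vartheta(t)>\sqrt{t}$ (in place of your generic Chebyshev estimate) to get $\int_2^x(\vartheta(t)-t)\,\text{d}t<(\omega-2/3)x^{3/2}-x\log 2\pi$, and negativity follows from \eqref{2.12}. That half of your proposal is therefore fine and matches the paper's own machinery.

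The converse is where there is a genuine gap. Your phase-alignment argument works cleanly only when the supremum $\Theta=\sup_\rho\text{Re}(\rho)$ is \emph{attained} by some zero $\rho_0$; then the pair $\rho_0,\overline{\rho_0}$ really does dominate the rest of the sum in \eqref{2.3}. When $\Theta$ is not attained you propose clustering finitely many zeros near $\Theta$ and using Kronecker, but this does not close the argument: there may be infinitely many zeros with real parts arbitrarily close to $\Theta$, and no finite subcollection can be shown to dominate the remaining tail uniformly along your subsequence. The standard way around this (and presumably what Johnston does) is to abandon the explicit-formula route entirely and instead apply Landau's theorem on Dirichlet integrals: if $\int_2^x(\vartheta(t)-t)\,\text{d}t$ were of constant sign for all large $x$, its Mellin transform would have its first singularity on the real axis, which one checks is incompatible with a zero of $\zeta(s)$ off the critical line. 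Your sketch is directionally sound but would need to be replaced by this Landau-type argument to be made rigorous.
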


Now, we can use a result of Nicolas \cite[Lemma 2.4]{nicolas} to give the following proof of Theorem \ref{thm107} where we give a slight refinement of Johnston's criterion in Lemma \ref{lem501}.

\begin{proof}[Proof of Theorem \ref{thm107}]
We assume that the Riemann hypothesis is true and let $s = 121$. First, we note that
\begin{equation}
\int_2^x (\vartheta(t) - t) \, \text{d}t = \int_2^x (\vartheta(t) - \psi(t)) \, \text{d}t + R_1(x) \tag{5.1} \label{5.1}
\end{equation}
for every $x \geq 2$, where $R_1(x)$ is defined as in \eqref{2.1}. As in the proof of Theorem \ref{thm102}, let $c_0 = \log 2\pi$ and $c_1 = 2 + (\zeta'/\zeta)(-1)$. By Nicolas \cite[Lemma 2.4]{nicolas}, we have $\psi(t) - \vartheta(t) > \sqrt{t}$ for every $t \geq s$. Substituting this inequality and the right-hand side inequality of Lemma \ref{lem201} into \eqref{5.1}, we can see that
\begin{displaymath}
\int_2^x (\vartheta(t) - t) \, \text{d}t < \mu_0 + \left( \omega - \frac{2}{3} \right) x^{3/2} - c_0 x
\end{displaymath}
for every $x \geq s$, where the constant $\mu_0$ is defined by
\begin{displaymath}
\mu_0 = \int_2^s (\vartheta(t) - t) \, \text{d}t + c_1 + \frac{2s^{3/2}}{3}.
\end{displaymath}
Since $\mu_0 = -210.2527\ldots$, we get the required right-hand side inequality for every $x \geq s$. On the other hand, we have $\psi(t) - \vartheta(t) < \sqrt{t} + 4x^{1/3}/3$ for every $t \geq 1$ (see Nicolas \cite[Lemma 2.4]{nicolas}). Applying this inequality toghether with the left-hand side inequality of Lemma \ref{lem201} to \eqref{5.1}, it turns out that
\begin{displaymath}
\int_2^x (\vartheta(t) - t) \, \text{d}t > \mu_1 - \left( \omega + \frac{2}{3} \right) \, x^{3/2} - x^{4/3} - c_0x - \frac{x}{2(x^2-1)}
\end{displaymath}
for every $x \geq 2$, where $\mu_1$ is a constant defined by $\mu_1 = c_1 + 2^{4/3} + 2^{5/2}/3$. A simple computation shows that $\mu_1 > t/(2(t^2-1))$ for every $t \geq 2$. Hence, the required left-hand side inequality is fulfilled for every $x \geq 2$ and we arrive at the end of the proof.
\end{proof}

As a direct consequence of Theorem \ref{thm107} we get the following analogy of \eqref{2.17}.

\begin{kor} \label{kor1102}
Let $\omega = \sum_{\rho} |\rho(\rho+1))|^{-1}$ where $\rho$ runs over all all nontrivial zeros of the Riemann zeta function. Under the assumption that the Riemann hypothesis is true, one has
\begin{displaymath}
\limsup_{x \to \infty} \left( \frac{1}{x^{3/2}} \int_2^x (\vartheta(t) - t) \, \emph{d}t \right) \leq \omega -\frac{2}{3}
\end{displaymath}
and
\begin{displaymath}
\liminf_{x \to \infty} \left( \frac{1}{x^{3/2}} \int_2^x (\vartheta(t) - t) \, \emph{d}t \right) \geq -\omega - \frac{2}{3}.
\end{displaymath}
\end{kor}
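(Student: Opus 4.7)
The plan is to derive this corollary as a direct consequence of Theorem \ref{thm107} by simply dividing through by $x^{3/2}$ and taking limits as $x \to \infty$. No new ideas are required; the work was already done in establishing the effective inequalities.

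First I would recall the right-hand side inequality from Theorem \ref{thm107}: under RH, for every $x \geq 121$,
\begin{displaymath}
\int_2^x (\vartheta(t) - t) \, \text{d}t < \left( \omega - \frac{2}{3} \right) x^{3/2} - x \log 2\pi.
\end{displaymath}
Dividing by $x^{3/2}$ gives
\begin{displaymath}
\frac{1}{x^{3/2}} \int_2^x (\vartheta(t) - t) \, \text{d}t < \omega - \frac{2}{3} - \frac{\log 2\pi}{\sqrt{x}}.
\end{displaymath}
Since $\log 2\pi / \sqrt{x} \to 0$ as $x \to \infty$, taking $\limsup$ on both sides yields the first claim.

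For the second inequality, I would invoke the left-hand side of Theorem \ref{thm107}: for every $x \geq 2$,
\begin{displaymath}
\int_2^x (\vartheta(t) - t) \, \text{d}t > -\left( \omega + \frac{2}{3} \right) x^{3/2} - x^{4/3} - x \log 2\pi.
\end{displaymath}
Again dividing by $x^{3/2}$ gives
\begin{displaymath}
\frac{1}{x^{3/2}} \int_2^x (\vartheta(t) - t) \, \text{d}t > -\left( \omega + \frac{2}{3} \right) - \frac{1}{x^{1/6}} - \frac{\log 2\pi}{\sqrt{x}},
\end{displaymath}
and since both $x^{-1/6}$ and $x^{-1/2}$ tend to $0$, taking $\liminf$ gives the second claim.

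There is no real obstacle here; the only thing worth noting is that the $-x\log 2\pi$ term in Theorem \ref{thm107} is of lower order ($x$ versus $x^{3/2}$) and the auxiliary $-x^{4/3}$ term in the lower bound is likewise negligible, so both vanish after normalization. Thus the proof is essentially one display line per inequality, written out as above.
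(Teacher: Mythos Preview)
Your proposal is correct and matches the paper's own treatment: the paper states this corollary as ``a direct consequence of Theorem \ref{thm107}'' with no further proof, and your division by $x^{3/2}$ followed by passing to the limit is exactly the intended one-line argument.
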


Fianlly, we give a proof of Corollary \ref{kor108}.

\begin{proof}[Proof of Corollary \ref{kor108}]
If we assume that the Riemann hypothesis it true, we can combine the inequalities obtained in Theorem \ref{thm107} with the upper bound \eqref{2.12} for $\omega$ to get the required upper and lower bound. Finally, it suffices to note that Lemma \ref{lem501} implies the reverse implication.
\end{proof}

\section{Proof of Theorem \ref{thm109}}

In order to prove Theorem \ref{thm109} where we give an improvement of \eqref{1.19}, we first note the following lemma.

\begin{lem} \label{lem601}
Let $A_2(x)$ and the constant $D$ be given as in \eqref{4.2} and \eqref{1.20}, respectively. Then, we have
\begin{displaymath}
\int_2^x \frac{\vartheta(t) - t}{t^2} \, \emph{d}t = D - \frac{\vartheta(x)-x}{x} + A_2(x)
\end{displaymath}
for every $x \geq 2$.
\end{lem}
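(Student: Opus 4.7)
The proof parallels that of Lemma \ref{lem302} (which treats the reciprocal-of-primes analogue), so the plan is to mimic that argument with the weights adjusted for the extra factor of $\log p$.

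The first step is to apply Abel's identity (see \cite[Theorem 4.2]{ap}) to the sum $\sum_{p\leq x}(\log p)/p$ with the arithmetic function $a(n) = \log n$ if $n$ is prime and $0$ otherwise, whose summatory function is exactly $\vartheta$. Choosing $f(t) = 1/t$ and noting that no primes lie in $[1,2)$, Abel summation yields
\begin{displaymath}
\sum_{p\leq x} \frac{\log p}{p} = \frac{\vartheta(x)}{x} + \int_2^x \frac{\vartheta(t)}{t^2}\, \text{d}t
\end{displaymath}
for every $x \geq 2$. This is the direct analogue of \eqref{3.3} in the proof of Lemma \ref{lem302}.

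Second, I split the target integral as
\begin{displaymath}
\int_2^x \frac{\vartheta(t)-t}{t^2}\, \text{d}t = \int_2^x \frac{\vartheta(t)}{t^2}\, \text{d}t - \int_2^x \frac{1}{t}\, \text{d}t = \int_2^x \frac{\vartheta(t)}{t^2}\, \text{d}t - \log x + \log 2,
\end{displaymath}
and substitute the Abel-identity expression for the remaining integral, obtaining
\begin{displaymath}
\int_2^x \frac{\vartheta(t)-t}{t^2}\, \text{d}t = \sum_{p\leq x}\frac{\log p}{p} - \frac{\vartheta(x)}{x} - \log x + \log 2.
\end{displaymath}

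The final step is purely algebraic. Using the definition \eqref{4.2} of $A_2(x)$, one has $\sum_{p\leq x}(\log p)/p = A_2(x) + \log x + E$; also $-\vartheta(x)/x = -(\vartheta(x)-x)/x - 1$. Substituting these into the previous display, the $\pm \log x$ terms cancel and the constant terms collapse to $\log 2 - 1 + E = D$ by the definition \eqref{1.20}, yielding exactly
\begin{displaymath}
\int_2^x \frac{\vartheta(t)-t}{t^2}\, \text{d}t = D - \frac{\vartheta(x)-x}{x} + A_2(x).
\end{displaymath}
There is no real obstacle here; the only point requiring care is the lower limit of Abel summation, which works cleanly at $y=2$ because the first prime is $2$ itself and the boundary contribution $\vartheta(2)/2 = (\log 2)/2$ matches the term subtracted off so that no extra constants survive.
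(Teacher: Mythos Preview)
Your proof is correct and follows exactly the approach indicated in the paper, which simply says ``Similar to the proof of Lemma \ref{lem302}.'' You have carried out precisely that analogue: Abel's identity applied to $\sum_{p\le x}(\log p)/p$ with summatory function $\vartheta$, followed by the elementary substitutions using \eqref{4.2} and \eqref{1.20}.
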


\begin{proof}
Similar to the proof of Lemma \ref{lem302}.
\end{proof}

Because of Lemma \ref{lem601}, the proof of Theorem \eqref{thm109} is now rather simple.

%

\begin{proof}[Proof of Theorem \ref{thm109}]
The claim follows from Lemmata \ref{lem601} and \ref{lem401}, and \cite[Proposition 1.1]{axlernew}.
\end{proof}

\begin{kor} \label{kor602}
One has
\begin{displaymath}
\int_2^{\infty} \frac{\psi(t) - \vartheta(t)}{t^2} \, \emph{d}t = \sum_{p} \frac{\log p}{p(p-1)} = 0.755366 \ldots.
\end{displaymath}
\end{kor}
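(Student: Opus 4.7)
The plan is to unfold $\psi(t)-\vartheta(t)$ as a sum over higher prime powers and then swap summation with integration. From the definition $\psi(t)=\sum_{p^k\le t}\log p$ one has the pointwise identity
$$\psi(t)-\vartheta(t)=\sum_{\substack{k\ge 2\\ p^k\le t}}\log p$$
for every $t\ge 2$. Since $p^k\ge 4>2$ whenever $p$ is prime and $k\ge 2$, every such pair $(p,k)$ contributes to the integrand on the full interval $[p^k,\infty)$, and every term is nonnegative.

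Next I would invoke Tonelli's theorem to interchange the integral with the double sum. This is legitimate because the summands are nonnegative and because the bound $\sum_{k\ge 2}p^{-k}=1/(p(p-1))\le 2/p^2$ combined with the standard convergence of $\sum_p \log p/p^2$ controls the resulting double series. After interchanging, each inner integral evaluates to $\log p\int_{p^k}^{\infty} t^{-2}\,dt=\log p/p^k$, and summing the geometric tail gives
$$\int_2^{\infty}\frac{\psi(t)-\vartheta(t)}{t^2}\,dt=\sum_p\log p\sum_{k\ge 2}\frac{1}{p^k}=\sum_p\frac{\log p}{p(p-1)},$$
which is the first equality in the statement.

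For the numerical value, I would simply read off the defining relation \eqref{1.14} for $E$, which rearranges to $\sum_p\log p/(p(p-1))=-\gamma-E$. Substituting the listed approximations $\gamma=0.577215\ldots$ and $E=-1.3325\ldots$ then yields the value $0.755366\ldots$.

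No serious obstacle arises: the argument is structurally parallel to Lemma \ref{lem302}, with the prime-power expansion of $\psi-\vartheta$ playing the role of the Abel-summation identity used for $\pi$. The only subtlety worth flagging is the interchange of summation and integration, and this is immediate from the pointwise nonnegativity of the integrand together with the absolute convergence just displayed.
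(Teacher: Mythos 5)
Your argument is correct, but it follows a genuinely different route from the paper. You prove the identity directly: you expand $\psi(t)-\vartheta(t)=\sum_{p^k\le t,\,k\ge 2}\log p$, interchange sum and integral by Tonelli (legitimate, as you note, by nonnegativity and the comparison $1/(p(p-1))\le 2/p^2$ with $\sum_p \log p/p^2<\infty$), and evaluate each term as $\log p/p^k$ using $p^k\ge 4>2$; the geometric tail then gives $\sum_p \log p/(p(p-1))$, and the numerical value follows from \eqref{1.14}. The paper instead obtains the corollary by subtraction of two already-known integral evaluations: Jameson's classical identity \eqref{6.1}, namely $\int_2^{\infty}(\psi(t)-t)t^{-2}\,\text{d}t=-\gamma-1+\log 2$, minus the limit value \eqref{1.21} from Theorem \ref{thm109}, i.e.\ $\int_2^{\infty}(\vartheta(t)-t)t^{-2}\,\text{d}t=D=\log 2-1+E$, which yields $-\gamma-E=\sum_p\log p/(p(p-1))$ via \eqref{1.14} and \eqref{1.20}. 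Your proof is more elementary and self-contained: it needs neither Theorem \ref{thm109} nor Jameson's result, and it would stand on its own anywhere. The paper's derivation, by contrast, is a one-line consequence of the machinery it has just built, and it doubles as a consistency check linking the constants $D$, $E$, and $\gamma$ to the classical value of the $\psi$-integral, which is the role the corollary plays in the paper's narrative. Both are sound; the only point worth flagging in yours, the interchange of summation and integration, is exactly the one you already justified.
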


\begin{proof}
Let $\gamma$ be the Euler-Mascheroni constant. By Jameson \cite[Proposition 3.4.4]{jameson}, we have
\begin{equation}
\int_2^{\infty} \frac{\psi(t) - t}{t^2} \, \text{d}t = - \gamma - 1 + \log 2 = -0.884068\ldots. \tag{6.1} \label{6.1}
\end{equation}
If we combine this equality with \eqref{1.21}, \eqref{1.20}, and \eqref{1.14}, we get the desired identity.
\end{proof}

By \eqref{6.2}, there is a smallest positive integer $x_0$ so that
\begin{displaymath}
 \int_2^x \frac{\psi(t) - t}{t^2} \, \text{d}t < 0
\end{displaymath}
for every $x > x_0$. Johnston \cite[Theorem 1.3]{johnston2} showed that $x_0 = 2$. Now, Theorem \ref{thm109} and two results of Dusart \cite{dusart20181} imply the following better result.

\begin{kor} \label{kor603}
For every $x \geq 1,757,126,630,797$, one has
\begin{displaymath}
 - 0.8894 - \frac{0.024334}{2\log^2 x} \left( 1 + \frac{4}{\log x} \right) < \int_2^x \frac{\psi(t) - t}{t^2} \, \emph{d}t < - 0.8802 + \frac{0.024334}{2\log^2 x} \left( 1 + \frac{4}{\log x} \right).
\end{displaymath}
.
\end{kor}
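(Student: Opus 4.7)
The plan is to decompose the integrand via the identity
\[
\psi(t) - t = (\vartheta(t) - t) + (\psi(t) - \vartheta(t)),
\]
which gives
\[
\int_2^x \frac{\psi(t)-t}{t^2}\,\text{d}t = \int_2^x \frac{\vartheta(t)-t}{t^2}\,\text{d}t + \int_2^x \frac{\psi(t)-\vartheta(t)}{t^2}\,\text{d}t.
\]
The first summand is controlled directly by Theorem \ref{thm109}: for $x \geq 1{,}757{,}126{,}630{,}797$ it lies in the interval $\bigl[D - \tfrac{0.024334}{2\log^2 x}(1+\tfrac{4}{\log x}),\, D + \tfrac{0.024334}{2\log^2 x}(1+\tfrac{4}{\log x})\bigr]$, which accounts exactly for the $\log$-dependent error terms appearing in the target inequality.

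For the second summand, I would invoke the two explicit estimates for $\psi(t) - \vartheta(t)$ from Dusart \cite{dusart20181}, which bound this difference two-sidedly by $\sqrt{t}$ plus explicit smaller-order corrections. Integrating $1/t^2$ times these bounds gives effective control on $\int_2^x \frac{\psi(t)-\vartheta(t)}{t^2}\,\text{d}t$. A clean way to express the outcome is to use Corollary \ref{kor602}, which identifies
\[
\int_2^{\infty} \frac{\psi(t)-\vartheta(t)}{t^2}\,\text{d}t = \sum_{p}\frac{\log p}{p(p-1)} = 0.755366\ldots,
\]
and then to rewrite the partial integral as this limit minus the tail $\int_x^{\infty}\frac{\psi(t)-\vartheta(t)}{t^2}\,\text{d}t$. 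Dusart's upper bound of the form $\psi(t)-\vartheta(t) \leq \sqrt{t} + (\text{lower order})$ yields a tail of magnitude $O(1/\sqrt{x})$ with an explicit constant, which at $x = x_0 = 1{,}757{,}126{,}630{,}797$ is of order $10^{-6}$ and therefore essentially negligible.

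Combining the two contributions, the constant part is $D + \sum_{p}\log p/(p(p-1)) = -\gamma - 1 + \log 2 = -0.884068\ldots$, in agreement with \eqref{6.1}. The stated constants $-0.8894$ and $-0.8802$ are then obtained by allocating to $-0.884068\ldots$ a small safety margin that comfortably majorises the explicit tail contribution coming from the second summand at $x = x_0$, leaving the $\frac{0.024334}{2\log^2 x}(1+\tfrac{4}{\log x})$ term to play the role of the $x$-dependent error. The main obstacle is thus purely arithmetical: one has to extract the two constants from Dusart \cite{dusart20181} with care, carry them through the change of variables, and verify at the threshold $x_0$ that the combined envelope fits inside the target. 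Because $1/\sqrt{x_0} \approx 7.5 \times 10^{-7}$ is several orders of magnitude smaller than the gaps $\approx 5 \times 10^{-3}$ and $\approx 4 \times 10^{-3}$ separating $-0.884068\ldots$ from the two stated constants, I expect this verification to go through with substantial room to spare.
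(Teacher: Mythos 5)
Your proposal is correct, and it handles the key quantitative step differently from the paper. Both arguments start from the same decomposition $\psi(t)-t=(\vartheta(t)-t)+(\psi(t)-\vartheta(t))$ and control the first piece by Theorem \ref{thm109}; the difference lies in the treatment of $\int_2^x (\psi(t)-\vartheta(t))t^{-2}\,\text{d}t$. The paper splits this integral at $s=5{,}000$ as in \eqref{6.2}, evaluates the initial segment by a finite computation, and applies Dusart's two-sided bounds $\psi(t)-\vartheta(t)<(1+1.47\times 10^{-7})\sqrt{t}+1.78\,t^{1/3}$ (all $t>0$) and $\psi(t)-\vartheta(t)>0.9999\sqrt{t}$ ($t\geq 121$) on $[s,x]$; the resulting loss of roughly $2/\sqrt{s}\approx 0.028$ in each direction, plus the $t^{1/3}$ term, is exactly what produces the constants $-0.8894$ and $-0.8802$ around $\log 2-1-\gamma=-0.884068\ldots$. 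You instead take the exact value $\int_2^{\infty}(\psi(t)-\vartheta(t))t^{-2}\,\text{d}t=\sum_p \log p/(p(p-1))$ from Corollary \ref{kor602} and use Dusart only to bound the tail $\int_x^{\infty}$, which is at most about $2/\sqrt{x}+O(x^{-2/3})\approx 1.5\times 10^{-6}$ at the threshold, while nonnegativity of $\psi-\vartheta$ gives the other direction for free; since $D+\sum_p\log p/(p(p-1))=\log 2-1-\gamma$ exactly, your constant term is pinned down to within $\sim 10^{-6}$, which comfortably implies the stated $-0.8894$ and $-0.8802$. There is no circularity in invoking Corollary \ref{kor602}, as it rests only on Jameson's identity \eqref{6.1} and \eqref{1.21}, not on the corollary being proved. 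What each route buys: yours avoids any numerical integration over $[2,5000]$ and in fact yields a strictly sharper conclusion than stated, at the price of importing the closed-form identity \eqref{6.1}; the paper's version keeps the $\psi-\vartheta$ contribution fully effective via Dusart (plus one finite computation) and is independent of \eqref{6.1}, which is presumably why its constants are looser and asymmetric.
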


\begin{proof}
For a better readability, we set $s = 5,000$, $a_0 = 1+1.47 \times 10^{-7}$, and $a_1 = 0.9999$. One has
\begin{equation}
\int_2^x \frac{\psi(t) - t}{t^2} \, \text{d}t = \int_2^s \frac{\psi(t) - \vartheta(t)}{t^2} \, \text{d}t + \int_s^x \frac{\psi(t) - \vartheta(t)}{t^2} \, \text{d}t + \int_2^x \frac{\vartheta(t) - t}{t^2} \, \text{d}t. \tag{6.2} \label{6.2}
\end{equation}
By Dusart \cite[Corollary 4.5]{dusart20181}, we have $\psi(t) - \vartheta(t) < a_0\sqrt{t} + 1.78t^{1/3}$ for every $t > 0$. If we substitute the last inequality and the right-hand side inequality given in Theorem \ref{thm109} into \eqref{6.2}, we obtain the required upper bound
for every $x \geq 1,757,126,630,797$. On the other hand, Dusart \cite[Proposition 4.3]{dusart20181} showed that $\psi(t) - \vartheta(t) > a_1\sqrt{t}$ for every $t \geq 121$. Together with the left-hand side inequality given in Theorem \ref{thm109} and \eqref{6.2}, we obtain the required lower bound for every $x \geq 1,757,126,630,797$.
\end{proof}

\begin{rema}
Note that the positive integer $N_0 = 1,757,126,630,797$ might not be the smallest positive integer so that the inequalities in Corollary \ref{kor603} holds for every $x \geq N_0$.
\end{rema}

\section*{Acknowledgement}
I would like to thank Daniel R. Johnston, whose paper have inspired me to deal with the present subject. Moreover, I would also like to thank the two beautiful souls R. and O. for the never ending inspiration. Finally, I thank the anonymous reviewer for the useful comments and suggestions to improve the quality of this paper.

\end{document}